\documentclass[reqno]{amsart}
\usepackage{amsfonts,amsmath,amsthm}
\usepackage{latexsym}
\usepackage{array}
\usepackage{amssymb}
\usepackage{graphicx}
\usepackage{enumerate}
\usepackage{verbatim}
\usepackage{subfigure}
\usepackage{float}

\usepackage[english]{babel}

\usepackage{color}
\definecolor{marin}{rgb}   {0.,   0.3,   0.7} 
\definecolor{rouge}{rgb}   {0.8,   0.,   0.} 
\definecolor{sepia}{rgb}   {0.8,   0.5,   0.} 
\usepackage[colorlinks,citecolor=marin,linkcolor=rouge,
            bookmarksopen,
            bookmarksnumbered
           ]{hyperref}

\theoremstyle{plain} 
\newtheorem{theorem}{Theorem}[section]
\newtheorem{lemma}[theorem]{Lemma}

 \theoremstyle{remark}
\newtheorem{remark}[theorem]{Remark}
\newtheorem{example}[theorem]{Example}

\newcommand {\aplt} {\ {\raise-.5ex\hbox{$\buildrel<\over\sim$}}\ } 
\newcommand {\gplt} {\ {\raise-.5ex\hbox{$\buildrel>\over\sim$}}\ }

\newcommand{\T}{\mathbb{T}}\newcommand{\R}{\mathbb{R}}\newcommand{\C}{\mathbb{C}}
\newcommand{\Z}{\mathbb{Z}}\newcommand{\N}{\mathbb{N}}
\newcommand{\dd}{\mathrm{d}}
\newcommand{\nabo}{| \nabla |}
\newcommand{\sino}{ \sin(\tau| \nabla |)}
\newcommand{\coso} {\cos(\tau| \nabla |)}
\newcommand{\nabs}{\langle \nabla \rangle}

\numberwithin{equation}{section}

\begin{document}

\author{Sebastian Herr}
\address{Fakult\"{a}t f\"{u}r Mathematik, Universit\"{a}t Bielefeld,
  Postfach 10 01 31, 33501 Bielefeld, Germany}
\email{herr@math.uni-bielefeld.de}

\author{Katharina Schratz}
\address{Fakult\"{a}t f\"{u}r Mathematik, Karlsruhe Institute of Technology,
Englerstr. 2, 76131, Karlsruhe}
\email{katharina.schratz@kit.edu}

\begin{abstract}
The main challenge in the analysis of numerical schemes for the
Zakharov system originates from the presence of derivatives in the
nonlinearity. In this paper a new trigonometric time-integration
scheme for the Zakharov system is constructed and convergence is
proved.  The time-step restriction is independent from a spatial
discretization. Numerical experiments confirm the findings.
\end{abstract}

\keywords{Zakharov system -- numerical scheme -- convergence}
\subjclass[2010]{65N15}


\title{Trigonometric time integrators for the Zakharov system}
\maketitle

\section{Introduction}\label{sect:intro}
We consider the Zakharov system
\begin{equation}\label{eq:ZakO}
\begin{aligned}
& i \partial_t E + \Delta E = u E,\\
& \partial_{tt} u - \Delta u = \Delta \vert E\vert^2,
\end{aligned}
\end{equation}
with initial conditions
\begin{equation}\label{eq:icO}
E(0) = E_0,\; u(0) = u_0,\; \partial_t u(0) = u_1,
\end{equation}
for given initial data $E_0,u_0,u_1$ in appropriate Sobolev spaces. This system is a scalar model for Langmuir oscilations in a plasma, see \cite{Su99,Zak72}. Here, $E:\R^{1+d}\to \C$ denotes the  (scalar) electric field envelope and $u:\R^{1+d}\to \R$ the ion density fluctuation in spatial dimension $d\in \N$.
For practical implementation reasons we impose periodic boundary
conditions, hence both $E$ and $u$ are considered to be spatially periodic.

The Zakharov system has a Hamiltonian structure and conserved quantities. More precisely, for strong solutions we have
\begin{equation}\label{eq:l2con}
\frac{\dd}{\dd t} \int_{\T^d} |E(t,x)|^2 dx=0
\end{equation}
and, if $u_1$ has mean zero,
\begin{equation}\label{eq:ham}
 \frac{\dd}{\dd t} \int_{\T^d} |\nabla E(t,x)|^2 + u(t,x) |E(t,x)|^2 + \frac{1}{2}||\nabla|^{-1}\partial_t u(t,x)|^2+\frac{1}{2}|u(t,x)|^2dx=0,
\end{equation}
where $\nabo = \sqrt{-\Delta}$ and $\T^d=\R^d/ (2\pi \Z)^d$. The latter is called conservation of energy.

Several time integrators for solving the Zakharov system numerically have been proposed. Due to the outstanding performance of splitting methods for nonlinear Schr\"odinger equations, see the recent papers \cite{Faou12,Gau11,Lubich08} and references therein, splitting methods for the generalized Zakharov system were constructed in \cite{Bao03,Bao05,JiMa04}. In \cite{PayND83} finite differences for the time discretization and a pseudo spectral method for the space discretization were used to simulate the Zakharov system numerically. Numerically, the above schemes have been tested extensively. However, due to the difficult structure of the system, as explained  below in more detail, a convergence analysis is missing.

For the one dimensional Zakharov equations fully-implicit and
semi-explicit Crank-Nicolson type approximations based on finite
difference in time and space  were derived in \cite{Glass92b,Glass92}
and \cite{ChangGJ95,ChangJ94}, respectively. Numerical experiments
\cite{ChangJ94} indicate that the semi-explicit method (which is
explicit in $n$ and implicit in $E$) is preferable over the fully
implicit method (which is both implicit in $n$ and in $E$) due to the
high computational costs of the latter. However, its convergence only
holds under the constraint $\Delta t = \Delta x$, where $\Delta t$ and $\Delta x$ denote the time and space discretization parameters. Furthermore, due to the use of the Sobolev embedding theorem the convergence results only hold in one dimension.

The main challenge in the construction and analysis of any numerical scheme for the Zakharov system \eqref{eq:ZakO} originates from the presence of derivatives in the nonlinearity: Mild solutions are given by
\begin{equation}\label{mildZo}
\begin{aligned}
E(t) =& \mathrm{e}^{i t \Delta} E(0) -i \int_0^t \mathrm{e}^{i (t-\xi)\Delta} u(\xi) E(\xi)\dd \xi,\\
 u(t) =& \cos (t \nabo)u(0) + \frac{\sin (t\nabo)}{\nabo} u'(0)+\int_0^t \sin((t-\xi)\nabo) \nabo \vert E(\xi)\vert^2 \dd \xi.
\end{aligned}
\end{equation}
However, it is not obvious how to bound the quadratic term $\nabo \vert E \vert^2$, since
``naively'' estimating the solutions yields
\begin{equation*}
\begin{aligned}
  & \Vert E(t)\Vert_s \leq \Vert E(0)\Vert_s + c\int_0^t \Vert u (\xi) \Vert_s \Vert E(\xi)\Vert_s \dd \xi,\quad &s > d/2,\\
& \Vert u(t)\Vert_l \leq \Vert u(0)\Vert_l + \Vert u'(0)\Vert_{l-1}+ c\int_0^t\Vert E(\xi)\Vert_{l+1}^2\dd \xi,\quad & l+1>d/2,
\end{aligned}
\end{equation*}
which amounts to a \emph{loss of derivatives}, see Section \ref{sect:err} for a definition of $\|\cdot\|_s$.

In order to avoid this, we follow the strategy presented in \cite{OzaTsu92}: We reformulate the Zakharov system as a system in $(E,\partial_t E, u, \partial_t u)$. This allows us to construct trigonometric time-integration schemes for the Zakharov system~\eqref{eq:ZakO} without imposing any spatial-dependent time-step condition or too restrictive regularity assumptions on the initial data (such as analyticity). In particular, their convergence also holds in the limit $\Delta x \to 0$.

For recent developments in trigonometric and exponential integration schemes for wave-type equations we refer to \cite{Gau15,HLW,HochL99,HochOst10} and the references therein. For local-wellposedness of the Zakharov system in Sobolev spaces of low regularity on $\T^d$ we refer to \cite{Bourg94,Tak99,Kish13}. Concerning the well-posedness theory on $\R^d$ we refer to \cite{OzaTsu92,bc-96,gtv-97,bhht-09,bh-11} and references therein.

\section{Trigonometric integrators for the Zakharov system}
To avoid the \emph{loss of derivatives} we use the method devised in \cite{OzaTsu92}: We reformulate the Zakharov system \eqref{eq:ZakO} as
\begin{equation}\label{eq:Zak}
\begin{aligned}
& i \partial_t F + \Delta F = u F + \partial_t u \left(E(0)+\int_0^t F(\xi) \dd \xi\right)\\
& \partial_{tt} u - \Delta u = \Delta \vert E\vert^2,\\
&( -\Delta +1) E = i F- (u-1) \left(E(0) + \int_0^t F(\xi)\dd \xi\right),
\end{aligned}
\end{equation}
where $ F = \partial_t E$ (cf. \cite{OzaTsu92}), with initial
conditions
\begin{equation}\label{eq:ic}
F(0) = i \big(\Delta E(0) - u(0)E(0)\big), \; u(0)=u_0,
\; \partial_tu(0)=u_1, \; E(0)=E_0.
\end{equation}
Let
\begin{equation}\label{IF}
 \mathcal{I}_F(t):=  E_0+\int_0^{t} F(\lambda)\dd \lambda.
\end{equation}
Then the mild solutions of \eqref{eq:Zak} at time $t_{n+1}= t_n + \tau$ with $t_0 =0$ read
\begin{equation}\label{eq:solZ}
\begin{split}
F(t_n+\tau)=&\mathrm{e}^{i \tau \Delta} F(t_n) -i\int_0^\tau \mathrm{e}^{i (\tau-\xi)\Delta} \Big (
(uF+u' \mathcal{I}_F)(t_n+\xi) \Big)\dd \xi\\
 u(t_n+\tau) =& \cos (\tau \nabo)u(t_n) + \nabo^{-1} \sin (\tau \nabo) u'(t_n)\\
&+\int_0^\tau \nabo^{-1} \sin((\tau-\xi)\nabo) \Delta \vert E(t_n+\xi)\vert^2 \dd \xi,\\
  u'(t_n+\tau) =& - \nabo\sin (\tau \nabo)u(t_n) +\cos (\tau \nabo) u'(t_n)\\
&+  \int_0^\tau  \cos((\tau-\xi)\nabo) \Delta \vert E(t_n+\xi)\vert^2 \dd \xi,\\
 E(t_n+\tau) =& (1 -\Delta)^{-1}\big(i F(t_n+\tau)-(u(t_n+\tau)-1)\mathcal{I}_F(t_n+\tau)\big).
\end{split}
\end{equation}
In Section \ref{sect:tri} we develop a first-order trigonometric
integration scheme based on the reformulation \eqref{eq:solZ} and
rigorously carry out its convergence analysis. Furthermore, in Section
\ref{sec:2app} we indicate a generalization to a second-order
trigonometric integration scheme.

\section{A first-order scheme}\label{sect:tri}
In order to construct a robust first-order scheme we approximate the
exact solutions $(u,u',F,E)(t_n+\xi)$ appearing in the integrals in \eqref{eq:solZ}
via Taylor series expansion up to the first-order remainder term. This allows us to integrate $\mathrm{e}^{i \xi \Delta},\mathrm{cos}( \xi \Delta)$ and $\mathrm{sin}(\xi\Delta)$ exactly. Furthermore, we use the following approximation for the integrals over $F$: Note that for $0\leq \xi \leq \tau$
\begin{equation}\label{intF}
\begin{aligned}
\int_0^{t_n+\xi} F(\lambda) \dd \lambda&=
\sum_{k=0}^{n-1}\int_{t_k}^{t_{k+1}} F(\lambda)\dd\lambda +
\int_{t_n}^{t_n+\xi} F(\lambda)\dd\lambda\\
& =
\sum_{k=0}^{n-1}\int_0^\tau F(t_k+\lambda)\dd\lambda + \int_0^\xi
F(t_n+\lambda)\dd\lambda
\\
&=\tau \sum_{k=0}^{n} F(t_k) +\mathcal{F}_{\tau,\xi,n},
\end{aligned}
\end{equation}
where
\begin{align*}
\mathcal{F}_{\tau,\xi,n}:=(\xi-\tau)F(t_n)+\sum_{k=0}^{n-1} \int_0^\tau
\int_0^\lambda F'(t_k+r)\dd r\dd \lambda +\int_0^\xi
\int_0^\lambda F'(t_n+r)\dd r\dd \lambda.
\end{align*}
We observe that
\begin{equation}\label{intF2}
\|\mathcal{F}_{\tau,\xi,n}\|_s\leq \tau \|F(t_n)\|_s+ \tau t_n \sup_{t\in [0,t_{n+1}]}\|F'(t)\|_s,
\end{equation}
and it this sense we have, for $0\leq \xi\leq \tau$,
\[
\int_0^{t_n+\xi} F(\lambda) \dd \lambda \approx  \tau \sum_{k=0}^{n} F(t_k).
\]
Recall the initial conditions \eqref{eq:ic}. By setting
\begin{equation}\label{eq:init}
E^0 = E_0,\quad u^0=u_0,\quad u'^0 = u_1, \quad F^0 = i (\Delta E^0-u^0E^0),\quad S_F^0 = E_0+\tau F^0,
\end{equation}
we obtain, for $n\geq 0$, the first-oder trigonometric time-integration scheme
\begin{equation}\label{eq:numZ}
\begin{aligned}
F^{n+1} &= \mathrm{e}^{i \tau \Delta} F^n +i \tau \frac{1-\mathrm{e}^{i\tau\Delta}}{i\tau \Delta}
 \left (
u^n F^n+u'^n  S_F^n \right),\\
 u^{n+1} & = \cos (\tau \nabo)u^n + \nabo^{-1} \sin (\tau \nabo) u'^n+\tau\nabo^{-1}\frac{1- \cos(\tau \nabo)}{\tau\nabo} \Delta \vert E^n\vert^2,\\
    u'^{n+1} & = - \nabo\sin (\tau \nabo)u^n +\cos (\tau \nabo) u'^n+\tau \frac{\sin(\tau\nabo)}{\tau\nabo} \Delta \vert E^n\vert^2,\\
    S_F^{n+1} & = S_F^n + \tau F^{n+1},\\
E^{n+1} & =   (-\Delta+1)^{-1}\left(  i F^{n+1} -(u^{n+1}-1) S_F^{n+1}\right),
\end{aligned}
\end{equation}
\begin{remark}
Note that for given $(E^n,F^n,u^n,u'^n, S_F^n)$ we can compute the next iteration without saving $(E^k,F^k,u^k,u'^k, S_F^k)$ for any $k < n$.
\end{remark}

\begin{remark}
For initial data of sufficiently high Sobolev regularity we will prove that the scheme \eqref{eq:numZ} is of first-order. Note that one can also use higher order quadrature formulas to generate higher order schemes, given additional smoothness of the initial data. We give a generalization to a second-order scheme in Section \ref{sec:2app}.
\end{remark}
\subsection{Error analysis}\label{sect:err}
In this section we carry out the error analysis of the trigonometric
time-integration scheme \eqref{eq:numZ}. In the following we set for  $f(x) = \sum_{k\in \mathbb{Z}^d} \hat{f}(k) \mathrm{e}^{i k \cdot x}$ and $s \in \mathbb{R}$
\begin{equation*}
\begin{aligned}
& \nabo^s f(x) := \sum_{k \in \mathbb{Z}^d} \vert k\vert^s \hat{f}(k) \mathrm{e}^{i k \cdot x } ,\quad
\langle \nabla \rangle^s f(x) := \nabo^s f(x) + \hat{f}(0)
\end{aligned}
\end{equation*}
and define
$$
\Vert f \Vert_s := \Vert \langle \nabla \rangle^s f \Vert_{L^2(\mathbb{T}^d)}.
$$
For $s>d/2$, we will exploit the fact that $H^s(\T^d)$ is an algebra, with the standard product estimate
\[\Vert f g \Vert_s \leq c \Vert f \Vert_s \Vert g \Vert_s,\]
where $c$ only depends on $d$ and $s$. Furthermore, we denote by $\mathcal{L}(X)$ the space of bounded linear operators $T:X\to X$, and sometimes we write $\|T \|_{s}$ instead of $\|T \|_{\mathcal{L}(H^s(\T^d))}$ for the sake of brevity.

In view of the structure of the Zakharov system
\[
\|(E(t),u(t),u'(t))\|_{[s]}:=\Vert E(t)\Vert_{s+2} + \Vert u(t)\Vert_{s+1} + \Vert u'(t)\Vert_s
\]
is the natural norm for our error analysis, the auxiliary function $F$ will be measured in $\|\cdot\|_s$ then.
\begin{theorem}\label{thm:s}
 Fix $s>d/2$ and $0<\gamma \leq 1$. For any $T\in (0,\infty)$, suppose that
\begin{equation*}
E \in \mathcal{C}([0,T];H^{s+2+2\gamma}(\mathbb{T}^d)),\quad u \in \mathcal{C}([0,T]; H^{s+1+2\gamma}(\mathbb{T}^d)) \cap \mathcal{C}^1([0,T]; H^{s+2\gamma}(\mathbb{T}^d))
\end{equation*}
is a mild solution of \eqref{eq:ZakO} with
 \begin{equation}\label{regTaylor}
 m_{s+2\gamma}(T) := \sup_{t\in [0,T]} \|(E(t),u(t),u'(t))\|_{[s+2\gamma]}<\infty.
 \end{equation}
Then, there exists $\tau_0>0$ such that for all $0\leq \tau\leq \tau_0$ and $t_n = n\tau \leq T$ the trigonometric time-integration scheme \eqref{eq:numZ} is convergent of order $\gamma$, i.e.,
$$ 
\|(E(t_n)-E^n, u(t_n)-u^n,u'(t_n)-u'^n)\|_{[s]} \leq e^{c_1}c_2 \tau^\gamma  ,
$$
where $c_1$ and $c_2$ depend only on $m_{s}(T)$ and $m_{s+2\gamma}(T)$, respectively, as well as on $T$,  $d$ and $s$.
\end{theorem}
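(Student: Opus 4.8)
The plan is to run a standard Lady Windermere's fan / discrete Gronwall argument, comparing the numerical flow with the exact flow one step at a time. Introduce the local error by applying one step of the scheme \eqref{eq:numZ} to the exact data $(E(t_n),F(t_n),u(t_n),u'(t_n), \mathcal I_F(t_n))$ and comparing with the exact solution $(E(t_{n+1}),\dots)$ given by \eqref{eq:solZ}. The difference between the two comes from three sources: replacing $(u F + u' \mathcal I_F)(t_n+\xi)$ and $\Delta|E(t_n+\xi)|^2$ by their values at $\xi=0$ inside the Duhamel integrals (the Taylor remainder, which costs $\mathcal O(\tau^2)$ times a bound on the $\xi$-derivative of the integrand, hence on $\|F'\|$, $\|u''\|$, etc.), and replacing the exact integral $\int_0^{t_n+\xi}F$ by the quadrature $\tau\sum_{k=0}^n F(t_k)$, whose error is controlled by \eqref{intF2}. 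First I would record that all the operators appearing in \eqref{eq:numZ} — $e^{i\tau\Delta}$, $\cos(\tau\nabo)$, $\nabo^{-1}\sin(\tau\nabo)$, the filter functions $(1-e^{i\tau\Delta})/(i\tau\Delta)$, $(1-\cos(\tau\nabo))/(\tau\nabo)$, $\sin(\tau\nabo)/(\tau\nabo)$, and $(1-\Delta)^{-1}$ together with $(1-\Delta)^{-1}\Delta$ and $\nabo(1-\Delta)^{-1}$ — are bounded on $H^s(\T^d)$ uniformly in $\tau$, with operator norms $\le c$; this is where the reformulation pays off, because $\nabo(1-\Delta)^{-1}$ and $(1-\Delta)^{-1}\Delta$ lose no derivatives. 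Combining these bounds with the algebra property $\|fg\|_s\le c\|f\|_s\|g\|_s$ gives a one-step \emph{stability} estimate: the map sending $(E^n,F^n,u^n,u'^n,S_F^n)$ to $(E^{n+1},\dots)$ is Lipschitz in the norm $\|E\|_{s+2}+\|F\|_s+\|u\|_{s+1}+\|u'\|_s+\tau^{-1}\|S_F-\text{(exact }\mathcal I_F)\|_s$ (or simply carry $S_F$'s error separately), with Lipschitz constant $1+c\tau(1+m_s(T))$, provided the numerical iterates stay in a fixed ball — which one justifies a posteriori by the induction, shrinking $\tau_0$ if necessary.

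The core of the argument is then the telescoping identity: writing $\mathbf e^n$ for the global error vector $(E(t_n)-E^n,\dots)$ and $\mathbf d^n$ for the local (consistency) error at step $n$, one has $\mathbf e^{n+1}=\Phi_\tau(\text{exact}_n)-\Phi_\tau(\text{num}_n)+\mathbf d^{n+1}$, so that
\[
\|\mathbf e^{n+1}\|_{[s]} \le (1+c\tau)\,\|\mathbf e^n\|_{[s]} + \|\mathbf d^{n+1}\|_{[s]},
\]
and iterating (discrete Gronwall) gives $\|\mathbf e^n\|_{[s]}\le e^{c t_n}\sum_{k\le n}\|\mathbf d^k\|_{[s]}$. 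Here one must be slightly careful that the local error of the quadrature for $\int_0^{t_n+\xi}F$ is $\mathcal O(\tau\cdot t_n)$ by \eqref{intF2} rather than $\mathcal O(\tau^2)$, but summed against the extra $\tau$ from the outer $\xi$-integration in the $F$-equation it still contributes $\mathcal O(\tau^2)$ per step before summation, and the accumulated $t_n\le T$ is absorbed into $c_2$. The Taylor-remainder part of $\mathbf d^k$ is $\mathcal O(\tau^2)$ times $\sup_{[0,T]}(\|F'\|_s+\|u''\|_s+\|\partial_t|E|^2\|_s+\dots)$; these second-order time derivatives of the exact solution must be bounded in terms of $m_{s+2\gamma}(T)$ by differentiating the equations \eqref{eq:Zak} and using the algebra estimate — this is the point where the $2\gamma$ extra derivatives on the data are consumed. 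Summing $n\le T/\tau$ local errors of size $\mathcal O(\tau^2)$ yields the global $\mathcal O(\tau)$; for $\gamma<1$ one instead interpolates, estimating the relevant remainders by $\mathcal O(\tau^{1+\gamma})$ using the extra $H^{s+2\gamma}$ regularity (a fractional Taylor / interpolation estimate, e.g. bounding $\|\int_0^\xi(\text{integrand}(t_n+r)-\text{integrand}(t_n))\,dr\|_s$ by $\tau^{1+\gamma}$ times a $2\gamma$-Hölder-type seminorm controlled by $m_{s+2\gamma}$), which gives the stated order $\gamma$.

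The main obstacle is the circular dependence between stability and boundedness of the numerical solution: the one-step Lipschitz bound is only valid on a bounded set, yet to know the iterates remain bounded one needs the error estimate, which needs the Lipschitz bound. The clean way through is a bootstrap/induction on $n$: assume $\|\mathbf e^k\|_{[s]}\le e^{c_1}c_2\tau^\gamma$ for all $k\le n$ (hence $\|(E^k,u^k,u'^k)\|_{[s]}\le m_s(T)+1$ and $\|F^k\|_s$ bounded, once $\tau_0$ is small enough that $e^{c_1}c_2\tau_0^\gamma\le 1$), derive the stability constant on that ball, push the estimate to step $n+1$, and check the same bound still holds — this fixes $\tau_0$ in terms of $c_1,c_2,m_s(T)$. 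A secondary technical nuisance is keeping track of the auxiliary quantity $S_F^n$ versus $\mathcal I_F(t_{n+1})$ and the consistency of the $E^{n}$-update $E^{n+1}=(1-\Delta)^{-1}(iF^{n+1}-(u^{n+1}-1)S_F^{n+1})$ with the exact relation, but since no time integration is involved there the error in $E^{n+1}$ is simply $c$ times the errors in $F^{n+1}$, $u^{n+1}$ and $S_F^{n+1}$, and this propagates harmlessly. I expect the write-up to reduce to: (i) uniform operator bounds; (ii) regularity estimates for $F',u'',\dots$ from \eqref{eq:Zak}; (iii) the local error bound with the fractional-Taylor refinement for $\gamma<1$; (iv) the one-step stability estimate on a ball; (v) the discrete Gronwall induction.
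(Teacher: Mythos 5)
Your proposal follows essentially the same route as the paper: a fractional Taylor/quadrature estimate for the local error that spends the extra $2\gamma$ derivatives (the paper's Lemma \ref{lem:locerror}, using bounds like $\|(1-e^{i\xi\Delta})(-\xi\Delta)^{-\gamma}\|_s\leq 2$), a one-step stability bound of the form $1+c\tau$, a discrete Gronwall iteration, and a continuity/bootstrap argument to keep the numerical iterates in a ball of radius $2m_s(T)$ and thereby fix $\tau_0$. The one step you assert rather than justify is the $1+c\tau$ stability of the wave block: the paper must measure the error in the variables $\bigl(\nabs(u(t_n)-u^n),\,u'(t_n)-u'^n\bigr)$ and prove Lemma \ref{lem:stab} by conjugating $O_\tau$ to $\mathrm{diag}(e^{i\tau\nabo},e^{-i\tau\nabo})$ plus an $\mathcal{O}(\tau)$ zero-mode shear, and by folding the $E$-error back into an operator perturbation $P_{\tau,k}$ acting on that same vector via the elliptic relation \eqref{nabE} --- merely knowing that $\cos(\tau\nabo)$, $\nabo^{-1}\sin(\tau\nabo)$, etc.\ are individually bounded (your ``operator norms $\leq c$'') would only yield a $\tau$-independent one-step constant larger than $1$ and would destroy the Gronwall step, so this rotation structure is the one ingredient your write-up would still have to supply.
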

\begin{remark} Theorem \ref{thm:s} implies first-order convergence in the case $\gamma=1$.
\end{remark}

\begin{remark}\label{rem:lwp}
Note that the Zakharov system \eqref{eq:ZakO} is locally well-posed in the space
$$
H^s(\T^d) \times H^\ell(\T^d)\times H^{\ell-1}(\T^d) \ni (E,u,u'),$$
provided that
\begin{equation}\label{lwzd}
\begin{aligned}
& 0\leq s-\ell \leq 1, \quad 1/2 \leq \ell+1/2 \leq 2s,\quad \text{for } d = 1,\\
& 0\leq s-\ell \leq 1, \quad 1 \leq \ell+1 \leq 2s,\quad\quad\quad \text{for } d = 2,\\
& 0\leq s-\ell \leq 1, \quad d-1<  \ell+d/2 \leq 2s,~ \text{for } d \geq 3,
\end{aligned}
\end{equation}
see \cite{Kish13,Tak99}. Hence, for $0 \leq \gamma \leq 1$ and
\begin{equation*}\label{regAN}
\Vert E(0)\Vert_{s+2+2\gamma}+ \Vert u(0)\Vert_{s+1+2\gamma}+ \Vert u'(0)\Vert_{s+2\gamma} \leq M_\gamma
\end{equation*}
there exists a $T_0=T_0(M_\gamma)>0$ such that
\begin{equation*}\label{correg}
E \in \mathcal{C}([0,T_0];H^{s+2+2\gamma}(\mathbb{T}^d)),\quad u \in \mathcal{C}([0,T_0]; H^{s+1+2\gamma}(\mathbb{T}^d)) \cap \mathcal{C}^1([0,T_0]; H^{s+2\gamma}(\mathbb{T}^d)),
\end{equation*}
which implies that \eqref{regTaylor} holds at least for $T = T_0$.
\end{remark}

\begin{proof}[Proof of Theorem \ref{thm:s}]
Let $s > d/2$. Due to the fact that $H^s(\T^d)$ is an algebra it is easy to see that the mild solution $(E,u)$ of \eqref{eq:ZakO} satisfies $\partial_t E \in \mathcal{C}([0,T];H^{s+2\gamma}(\mathbb{T}^d))$ and that $(F,E,u)$ solves \eqref{eq:solZ} for $F=\partial_t E$, see above.
In the following, $c$ denotes a generic constant which depends on $d$ and $s$ only. We will prove the claim for $n+1$ instead of $n$. Subtracting the numerical solutions \eqref{eq:numZ} from the exact solutions \eqref{eq:solZ} yields
\begin{equation}\label{ErrorrecTaylor-f}
\begin{split}
 F(t_{n+1})-F^{n+1}
  ={}& \mathrm{e}^{i\tau\Delta} (F(t_n)-F^n) \\&+ i \tau\frac{1-\mathrm{e}^{i\tau\Delta}}{i\tau\Delta} \Big(u(t_n)(F(t_n)-F^n) + (u(t_n)-u^n)F^n\\& + (u'(t_n)-u'^n)E(0)+ u'(t_n)\big(\tau\sum_{k=0}^n(F(t_k)-F^k)\big) \\&+(u'(t_n)-u'^n) (\tau\sum_{k=0}^nF^k)\Big) + L_F^n,
\end{split}
\end{equation}
and
\begin{equation}\label{ErrorrecTaylor-u}
\begin{split}
\nabs (u(t_{n+1})-u^{n+1})  ={}& \coso \nabs (u(t_n)-u^n) \\
&+ \sino \frac{\nabs}{\nabo} (u'(t_n)-u'^n) \\&+ \tau\frac{1- \cos(\tau \nabo)}{\tau\nabo}\frac{\nabs}{\nabo}  \Delta \left( \vert E(t_n)\vert^2  - \vert E^n\vert^2\right) \\&+ \nabs L_u^n,
\end{split}
\end{equation}
as well as
\begin{equation}\label{ErrorrecTaylor-uprime}
\begin{split}
u'(t_{n+1})-u'^{n+1} ={}&- \sino \frac{\nabo}{\nabs}  \nabs (u(t_n)-u^n) \\
&+ \coso (u'(t_n)-u'^n) \\&+\tau \frac{\sin(\tau\nabo)}{\tau\nabo} \Delta \left( \vert E(t_n)\vert^2  - \vert E^n\vert^2\right) + L_{u'}^n,
\end{split}
\end{equation}
and
\begin{equation}\label{ErrorrecTaylor-e}
\begin{split}
E(t_{n+1}) - E^{n+1}  ={}& (-\Delta+1)^{-1}\Big( i (F(t_{n+1}) - F^{n+1}) \\&- (u(t_{n+1})-u^{n+1})\big(E(0)+\tau \sum_{k=0}^n F^{k+1})\big)\\
& + (1-u(t_{n+1})) \big(\tau \sum_{k=0}^n (F(t_{k+1})-F^{k+1})\big) + \Delta L_E^n\Big).
\end{split}
\end{equation}
The local errors at time $t_n$ satisfy
\begin{equation}\label{eq:locErr}
\begin{split}
 \Vert L_F^n \Vert_s ={}& \Big\Vert \int_0^\tau \mathrm{e}^{i(\tau-\xi)\Delta} \Big( u(t_n+\xi)F(t_n+\xi) - u(t_n) F(t_n) \\&+ u'(t_n+\xi) \mathcal{I}_F(t_n+\xi)-  u'(t_n) \big(E(0) + \tau \sum_{k=0}^n F(t_k)\big) \Big) \dd\xi \Big\Vert_s,\\
 \Vert \nabs L_u^n \Vert_{s} ={}&\Big\Vert \frac{\nabs}{\nabo}  \int_0^\tau \sin((\tau-\xi)\nabo) \Delta  \left(\vert E(t_n+\xi)\vert^2 -\vert E(t_n)\vert^2\right) \dd \xi \Big\Vert_{s},\\
 \Vert L_{u'}^n \Vert_s={}&  \Big\Vert \int_0^\tau \cos((\tau-\xi)\nabo) \Delta  \left(\vert E(t_n+\xi)\vert^2 -\vert E(t_n)\vert^2\right) \dd \xi \Big\Vert_{s},\\
 \Vert \Delta L_E^n\Vert_s ={}& \Big\Vert\big (1-u(t_{n+1})\big)\Big( \int_0^{t_n+\tau} F(\lambda)\dd \lambda - \tau \sum_{k=0}^n F(t_{k+1})\Big)\Big\Vert_s.
\end{split}
\end{equation}
By Lemma \ref{lem:locerror} below we have
\begin{equation}\label{eq:loce}
\max_{0\leq k \leq n} \big\{ \Vert L_F^k\Vert_s + \Vert \nabs L_u^k\Vert_s + \Vert L_{u'}^k\Vert_s + \tau \Vert \Delta L_E^k\Vert_s\big \} \leq c \tau^{1+\gamma} t_n(1+m_{s+2\gamma}(T))^4.
\end{equation}
Hence, the local errors \eqref{eq:locErr} are of order $\tau^{1+\gamma}$.

In order to deduce \emph{convergence of order $\gamma$ globally} from \eqref{eq:loce} we need to analyze the stability of the integration scheme \eqref{eq:numZ}. In the following we set
$$
m_s^n = \max_{0 \leq k \leq n} \{ \Vert E^k\Vert_{s+2}+\Vert F^k\Vert_s + \Vert u^k\Vert_{s+1}\}.
$$
\emph{(i) Error in $F$:} Note that for all $s \in \mathbb{R}$
\begin{equation}\label{stabF}
\Vert \mathrm{e}^{i \tau \Delta}\Vert_s \leq 1,\quad \Vert (i \tau\Delta)^{-1}(1- \mathrm{e}^{i \tau\Delta})\Vert_s \leq 2. 
\end{equation}
Plugging the stability bound \eqref{stabF} into the error recursion \eqref{ErrorrecTaylor-f} for $F$ 
yields that
\begin{equation}\label{errF}
\begin{split}
&\Vert F(t_{n+1}) - F^{n+1}\Vert_s \leq \big(1+ \tau c t_n m_s(t_n) \big) \max_{0\leq k \leq n} \Vert F(t_k)-F^k\Vert_s \\
&\qquad +  c (m_s(0) +t_nm_s^n) \left( \tau\Vert u(t_n)-u^n\Vert_s +\tau \Vert u'(t_n)-u'^n\Vert_s \right) + \Vert L_F^n\Vert_s.
\end{split}
\end{equation}

\emph{(ii) Error in $(\nabs u,u')$:}
We define the operator
\begin{equation}\label{eq:o}
O_\tau = \begin{pmatrix}
\coso & \sino\frac{\nabs}{\nabo} \\[1ex] - \sino\frac{\nabo}{\nabs}  & \coso
\end{pmatrix}.
\end{equation}
Formulas \eqref{ErrorrecTaylor-uprime} and \eqref{ErrorrecTaylor-u} imply that
\begin{equation}\label{erruup1}
\begin{aligned}
& \begin{pmatrix}
\nabs( u(t_{n+1}) - u^{n+1})\\[1ex] u'(t_{n+1}) - u'^{n+1}
\end{pmatrix}  =
O_\tau
\begin{pmatrix}
\nabs( u(t_{n}) - u^{n})\\[1ex] u'(t_{n}) - u'^{n}
\end{pmatrix}
\\& \qquad \qquad + \tau \begin{pmatrix}
\frac{1-\coso}{\tau\nabo} \frac{\nabs}{\nabo}  \\[1ex] \frac{\sino}{\tau\nabo} \end{pmatrix}
\Delta \left( \vert E(t_n)\vert^2 - \vert E^n\vert^2\right) + \begin{pmatrix}
\nabs L_u^n \\[1ex] L_{u'}^n\end{pmatrix}.
\end{aligned}
\end{equation}
Note that the error recursion in $E$ given in \eqref{ErrorrecTaylor-e} yields that
\begin{equation}\label{errE}
\begin{aligned}
\Vert E(t_{n}) - E^{n}\Vert_{s+2}& \leq  (1+c t_n m_s(t_n)) \max_{0\leq k \leq n} \Vert F(t_k)-F^k\Vert_s \\&+ c \big(m_s(0)+ t_nm_s^n\big) \Vert u(t_n)-u^n\Vert_s  + \Vert \Delta L_E^{n-1}\Vert_s,
\end{aligned}
\end{equation}
which allows us to solve the error recursion in $(\nabs u,u')$ as no loss of derivative occurs. More precisely, we have by \eqref{ErrorrecTaylor-e} that
\begin{equation}\label{nabE}
\begin{aligned}
 \nabla^\alpha (E(t_n) - E^n)  ={}& \frac{\nabla^\alpha}{\nabo^2+1} \Big(i( F(t_{n}) - F^{n}) - (u(t_{n})-u^{n})\eta(t_n) \\& + (1-u(t_n))\big( \tau \sum_{k=1}^n (F(t_{k})-F^{k})\big) + \Delta L_E^{n-1}\Big)\\
=&  \frac{\nabla^\alpha}{\nabo^2+1} \eta(t_n) \nabs^{-1} \big( \nabs ( u(t_n) - u^n)\big)+r^\alpha_{\tau,n}
\end{aligned}
\end{equation}
where, for any $\alpha=0,1,2$, $\Vert  \frac{\nabla^\alpha}{\nabo^2+1} \Vert_{s} \leq 1$ and
\begin{equation}\label{bbu}
\begin{aligned}
& \Vert r^\alpha_{\tau,n}\Vert_s \leq   (1+c t_n m_s(t_n)) \max_{0\leq k \leq n} \Vert F(t_k)-F^k\Vert_s + \Vert \Delta L_E^{n-1}\Vert_s\\
&\Vert \eta(t_n)\Vert_s = \Vert E(0) + \tau \sum_{j=1}^n F^j\Vert_s \leq m_s(0)+ t_n m_s^n.
\end{aligned}
\end{equation}
Note that for all $0<\tau\leq 1$ we have
\begin{equation}\label{sincosb}
\left \Vert  \frac{\sino}{\tau\nabo}\right\Vert_s \leq 1,\quad\left \Vert  \frac{1-\coso}{\tau\nabo} \frac{\nabs}{\nabo}\right\Vert_s \leq 2
\end{equation}
and
\begin{equation}\label{deltaE}
\begin{aligned}
\Delta \left( \vert E(t_n)\vert^2 - \vert E^n\vert^2\right)  =& \mathrm{Re}\Big\{ (\overline{E(t_n)+E^n}) \Delta (E(t_n)- E^n) \\&+ (\overline{\Delta E(t_n)+ \Delta E^n} ) (E(t_n) - E^n)\\
& +2 \nabla (\overline{ E(t_n)+ E^n}) \cdot \nabla (E(t_n)-E^n) \Big\}.
\end{aligned}
\end{equation}
In particular,
\begin{equation*}
\begin{aligned}
\Vert \Delta \left( \vert E(t_n)\vert^2 - \vert E^n\vert^2\right) \Vert_s &\leq 4 c \Vert E(t_n) + E^n\Vert_{s+2}\Vert E(t_n)-E^n\Vert_{s+2}\\&\leq 4 c \big(m_s(t_n) + m_s^n\big) \Vert E(t_n)-E^n\Vert_{s+2}.
\end{aligned}
\end{equation*}
Plugging \eqref{sincosb}, \eqref{deltaE}, \eqref{nabE} and \eqref{bbu} into \eqref{erruup1} we obtain that
\begin{equation}
\begin{aligned}\label{rec:uup}
\begin{pmatrix}
\nabs( u(t_{n+1}) - u^{n+1})\\[1ex] u'(t_{n+1}) - u'^{n+1}
\end{pmatrix} & = (O_\tau+P_{\tau,n})\begin{pmatrix}
\nabs( u(t_{n}) - u^{n})\\[1ex] u'(t_{n}) - u'^{n}
\end{pmatrix} +\mathcal{R}_{\tau,n},
\end{aligned}
\end{equation}
where
\begin{equation}\label{eq:p}
P_{\tau,k} = \tau \begin{pmatrix}
 \frac{1-\coso}{\tau\nabo} \frac{\nabs}{\nabo}&
0 \\
\frac{\sino}{\tau\nabo}
&
0
\end{pmatrix}\mathrm{Re}\{  p_{k}^1+p_k^2+p_k^3\},
\end{equation}
with the operators
\begin{equation}\label{def:p}
\begin{aligned}
& p_{k}^1 =   (\overline{E(t_k) +E^k}) \Delta(\nabo^2+1)^{-1} \eta(t_k)  \nabs^{-1},\\
& p_{k}^2=  (\overline{\Delta E(t_k) +\Delta E^k})(\nabo^2+1)^{-1} \eta(t_k)\nabs^{-1},\\
& p_{k}^3 =  2( \overline{\nabla E(t_k) +\nabla E^k})\cdot \nabla(\nabo^2+1)^{-1}   \eta(t_k) \nabs^{-1}.
\end{aligned}
\end{equation}
The remainder satisfies
\begin{equation}\label{restuup}
\begin{aligned}
& \Vert \mathcal{R}_{\tau,n} \Vert_s \leq c \Vert E(t_n) + E^n\Vert_{s+2} \Vert r_{\tau,n}^2\Vert_s+\Vert \nabs L_u^n\Vert_s + \Vert L_{u'}^n\Vert_s
\\& \qquad \leq   c \tau (m_{s}(t_n)+ m_s^n)\big((1+  t_nm_s(t_n) )\max_{0\leq k \leq n} \Vert F(t_k) - F^k\Vert_s  + \Vert \Delta L_E^{n-1}\Vert_s \big)\\&\qquad\qquad+\Vert \nabs L_u^n\Vert_s + \Vert L_{u'}^n\Vert_s.
\end{aligned}
\end{equation}
Note that \eqref{bbu} implies that for all $f\in H^{s-1}(\mathbb{T}^d)$ and $j=1,2,3$ we have
\begin{equation}\label{eq:eta}
\begin{aligned}
 \max_{0\leq k\leq n} \Vert p_{k}^j f\Vert_s &
 \leq c \max_{0\leq k\leq n}\Vert E(t_k) + E^k\Vert_{s+2}  \Vert \eta(t_k)\Vert_s \Vert f\Vert_{s-1}
 \\&\leq c (m_s(t_n)+m_s^n) (m_s(0)+t_n m_s^n)\Vert f \Vert_{s-1}.
 \end{aligned}
\end{equation}
Thus, plugging \eqref{sincosb} and \eqref{eq:eta} into \eqref{eq:o} we obtain that
\begin{equation}\label{boundP}
\begin{aligned}
\max_{0\leq k\leq n}\sup_{0<\tau\leq \tau_0}& \|\tau^{-1} P_{\tau,k}\|_{\mathcal{L}((H^s(\T^d))^2)} \leq  c (1+t_n)(m_s(t_n)+ m_s^n)^2=:q_{n},
\end{aligned}
\end{equation}
The bound \eqref{boundP} and Lemma \ref{lem:stab} below
yield the essential stability bound
\begin{equation}\label{stabWave}
\big \Vert \prod_{k=k_0}^n (O_\tau + P_{\tau,k}) \big \Vert_{\mathcal{L}((H^s(\T^d))^2)} \leq \mathrm{e}^{t_n \big(1+q_{n}\big)},
\end{equation}
for any $1\leq k_0\leq n$.
Thus, solving the error recursion in \eqref{rec:uup} we obtain by the stability bound \eqref{stabWave} and the bound on $\mathcal{R}_{\tau,k}$ in \eqref{restuup} that
\begin{equation}\label{buup}
\begin{split}
&\Vert u(t_{n+1}) -  u^{n+1}\Vert_{s+1} + \Vert
u'(t_{n+1})-u'^{n+1}\Vert_s \\
\leq{} & 2 n \max_{0\leq k \leq n} \Vert \mathcal{R}_{\tau,k}\Vert_s
\max_{1\leq k_0\leq n}\big \Vert \prod_{k=k_0}^n (O_\tau + P_{\tau,k}) \big \Vert_{\mathcal{L}((H^s(\T^d))^2)}\\
\leq{}& \Big(c t_n \big( m_s(t_n) + m_s^n\big)\big\{(1+t_n m_s(t_n)) \max_{0\leq k \leq n } \Vert F(t_k) - F^k\Vert_s \\&+  \max_{0\leq k \leq n}\Vert \Delta L_E^k\Vert_s\big\}
 + 2n \max_{0\leq k \leq n}(\Vert \nabs L_u^k\Vert_s + \Vert L_{u'}^k\Vert_s) \Big) \mathrm{e}^{t_n \big(1+q_{n}\big)}.
\end{split}
\end{equation}
The error bounds \eqref{errF}, \eqref{errE} and \eqref{buup} together with the bound on the local errors in \eqref{eq:loce} yield
\begin{equation}
\Vert F(t_{n+1}) - F^{n+1}\Vert_s \leq \big(1+ \tau A_1(m_s^n) \big)
\max_{0\leq k \leq n} \Vert F(t_k)-F^k\Vert_s + \tau^{1+\gamma}
A_2\label{eq:f-b}
\end{equation}
and
\begin{equation}
\Vert E(t_{n+1}) - E^{n+1}\Vert_{s+2} \leq A_1 (m_s^{n+1})\max_{0\leq
  k \leq n+1} \Vert F(t_k)-F^k\Vert_s + \tau^{\gamma}
A_2,\label{eq:e-b}
\end{equation}
and
\begin{equation}\label{eq:u-b}
\begin{split}
&\Vert u(t_{n+1}) - u^{n+1}\Vert_{s+1} + \Vert
u'(t_{n+1})-u'^{n+1}\Vert_s\\
\leq{} & A_1(m_s^n)  \max_{0\leq k \leq n} \Vert F(t_k)-F^k\Vert_s+ \tau^{\gamma} A_2,
\end{split}
\end{equation}
where $A_1=A_1(\cdot)$ is a continuous and monotonically increasing
function which also depends on $m_{s}(T)$, $A_2$ is a constant which depends on $m_{s+2\gamma}(T)$, and both $A_1$ and $A_2$ depend on $T$, $d$ and $s$.
From \eqref{eq:f-b} we obtain
\begin{equation}\label{eq:f-fb}
\max_{0\leq k \leq n+1} \Vert F(t_{k}) - F^{k}\Vert_s \leq A_2 \sum_{j=0}^{n}\big(1+ \tau A_1(m_s^n) \big)^j \tau^{1+\gamma}\leq  T e^{T A_1(m_s^n)} A_2\tau^\gamma.
\end{equation}
Then, \eqref{eq:u-b} implies
\begin{equation}\label{eq:u-fb}
\max_{0\leq k \leq n+1} \{\Vert u(t_{k}) - u^{k}\Vert_{s+1} + \Vert  u'(t_{k})-u'^{k}\Vert_s\}
\leq (T e^{T A_1(m_s^n)} +1)A_2\tau^\gamma.
\end{equation}
Similarly, \eqref{eq:e-b} implies
\begin{equation}\label{eq:e-fb}
\max_{0\leq k \leq n+1}\Vert E(t_{k}) - E^{k}\Vert_{s+2} 
\leq ( A_1(m_s^{n+1}) T e^{T A_1(m_s^{n+1})}+1)A_2 \tau^\gamma.
\end{equation}
Now, the assertion follows by a continuity argument:
We obtain that
\[
m_s^{n+1} \leq 2( A_1(m_s^{n+1}) T e^{T A_1(m_s^{n+1})}+1)A_2\tau^\gamma+m_s(T)
\]
The quantity $m_s^{n+1}$
depends continuously on $\tau$ and tends to zero as $\tau\to 0$. We conclude that
$m_s^{n+1}\leq 2m_s(T)$ as long as
\[
0<\tau\leq m_s(T)^{\frac{1}{\gamma}} (2( A_1(2m_s(T)) T e^{T A_1(2m_s(T))}+1)A_2)^{-\frac{1}{\gamma}}=:\tau_0.
\]
The claimed estimate (for $n+1$) follows with the constants $c_2=4TA_1(2m_s(T))A_2$ and $c_1=T A_1(2m_s(T))$.
\end{proof}
\begin{lemma}[Local error]\label{lem:locerror}
Let $s>d/2$. For $0\leq \gamma \leq 1$ the local errors defined in \eqref{eq:locErr} satisfy
$$
\max_{0\leq k \leq n} \big\{ \Vert L_F^k\Vert_s + \Vert \nabs L_u^k\Vert_s + \Vert L_{u'}^k\Vert_s + \tau \Vert \Delta L_E^k\Vert_s\big \} \leq c \tau^{1+\gamma} t_n(1+m_{s+2\gamma}(T))^4,
$$
where $m_{s+2\gamma}(T)$ is defined in \eqref{regTaylor} and $c$ depends on $d$ and $s$.
\end{lemma}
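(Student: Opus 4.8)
The plan is to bound the four quantities $\Vert L_F^n\Vert_s$, $\Vert\nabs L_u^n\Vert_s$, $\Vert L_{u'}^n\Vert_s$ and $\tau\Vert\Delta L_E^n\Vert_s$ one at a time. In every case the quantity is a Fourier multiplier bounded on $H^s$ uniformly for $\tau\le1$, applied to an integral over $\xi\in[0,\tau]$ of a difference of the exact solution evaluated at the two nearby times $t_n+\xi$ and $t_n$, together with — in $L_F^n$ and $\Delta L_E^n$ — the rectangle-rule remainder $\mathcal F_{\tau,\xi,n}$ from \eqref{intF}. The guiding principle is that each such time-difference is $\mathcal O(\xi^\gamma)$ in the appropriate norm, so that integration in $\xi$ gains a factor $\tau^{1+\gamma}$; the factor $t_n$ comes from the quadrature remainders, which accumulate over the $n$ grid cells covering $[0,t_n]$. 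Throughout we shrink $\tau_0$ so that $\tau_0\le1$.

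The first step is to record the auxiliary regularity forced by the two evolution equations. Since $F=\partial_t E=i(uE-\Delta E)$ and $H^{s+2\gamma}(\T^d)$ is an algebra, $F\in\mathcal C([0,T];H^{s+2\gamma})$ with $\sup_{[0,T]}\Vert F\Vert_{s+2\gamma}\le c(1+m)^2$, where $m:=m_{s+2\gamma}(T)$; differentiating once more, $\partial_t F=i(u'E+uF-\Delta F)\in\mathcal C([0,T];H^{s-2+2\gamma})$ with norm $\le c(1+m)^3$, and $\partial_t u'=\partial_{tt}u=\Delta u+\Delta|E|^2\in\mathcal C([0,T];H^{s-1+2\gamma})$ with norm $\le c(1+m)^2$. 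Combining $\Vert g(t+\xi)-g(t)\Vert_\sigma\le\xi\sup\Vert\partial_t g\Vert_\sigma$ at a low index $\sigma$ with $\Vert g(t+\xi)-g(t)\Vert_{\sigma'}\le2\sup\Vert g\Vert_{\sigma'}$ at a higher one, and interpolating so that the intermediate index is exactly $s$ (resp.\ $s+2$ for $E$), one obtains the Hölder-in-time bounds, for $0\le\xi\le1$,
\[
\Vert u(t+\xi)-u(t)\Vert_s\le c\xi^\gamma(1+m),\qquad \Vert u'(t+\xi)-u'(t)\Vert_s\le c\xi^\gamma(1+m)^2,
\]
\[
\Vert F(t+\xi)-F(t)\Vert_s\le c\xi^\gamma(1+m)^3,\qquad \Vert E(t+\xi)-E(t)\Vert_{s+2}\le c\xi^\gamma(1+m)^2.
\]
The hypothesis of Theorem \ref{thm:s} is calibrated precisely so that $H^s$ (and $H^{s+2}$) is an interpolation space between the space in which the time derivative is controlled and the one in which the function itself is controlled.

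Given these bounds, the four estimates are mechanical. For $\Vert\nabs L_u^n\Vert_s$ and $\Vert L_{u'}^n\Vert_s$: expand $|E(t_n+\xi)|^2-|E(t_n)|^2$ bilinearly (as in \eqref{deltaE}, with $E^n$ replaced by $E(t_n)$), apply the Leibniz rule for $\Delta$ and the algebra property to get $\Vert\Delta(|E(t_n+\xi)|^2-|E(t_n)|^2)\Vert_s\le c\Vert E(t_n+\xi)-E(t_n)\Vert_{s+2}(\Vert E(t_n+\xi)\Vert_{s+2}+\Vert E(t_n)\Vert_{s+2})\le c\xi^\gamma(1+m)^3$, and integrate in $\xi$. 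For $\Delta L_E^n$: write the quadrature remainder as $\int_0^{t_{n+1}}F-\tau\sum_{k=0}^nF(t_{k+1})=\sum_{k=0}^n\int_{t_k}^{t_{k+1}}(F(\lambda)-F(t_{k+1}))\dd\lambda$, bound each of the $n+1$ summands by $c\tau^{1+\gamma}(1+m)^3$ via the Hölder bound for $F$, and multiply by $\Vert1-u(t_{n+1})\Vert_s\le c(1+m)$, which yields $\tau\Vert\Delta L_E^n\Vert_s\le c\,t_{n+1}\tau^{1+\gamma}(1+m)^4$. For $\Vert L_F^n\Vert_s$: use $\Vert\mathrm e^{i(\tau-\xi)\Delta}\Vert_s\le1$ and split the integrand into $u(t_n+\xi)F(t_n+\xi)-u(t_n)F(t_n)$ (telescope, then apply the Hölder bounds and the algebra property) and $u'(t_n+\xi)\mathcal I_F(t_n+\xi)-u'(t_n)(E_0+\tau\sum_{k=0}^nF(t_k))$; for the latter use the identity $\mathcal I_F(t)=E(t)$ and $E_0+\tau\sum_{k=0}^nF(t_k)=\mathcal I_F(t_n+\xi)-\mathcal F_{\tau,\xi,n}$ to rewrite it as $(u'(t_n+\xi)-u'(t_n))E(t_n+\xi)+u'(t_n)\mathcal F_{\tau,\xi,n}$, and bound $\Vert\mathcal F_{\tau,\xi,n}\Vert_s\le c(1+t_n)\tau^\gamma(1+m)^3$ — from \eqref{intF2} when $\gamma=1$, and from the rectangle-rule representation of $\mathcal F_{\tau,\xi,n}$ together with the Hölder bound for $F$ when $\gamma<1$. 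Integrating in $\xi$ and adding the four contributions gives $\max_{0\le k\le n}\{\cdots\}\le c\,\tau^{1+\gamma}(1+t_n)(1+m)^4$ with $c=c(d,s)$, which is the claimed bound (with the harmless replacement of $t_n$ by $1+t_n$; only the order $\tau^{1+\gamma}$ per step is used in the proof of Theorem \ref{thm:s}).

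The only genuinely delicate point is the case $\gamma<1$. Because the Zakharov nonlinearity contains derivatives, $\partial_t F$ and $\partial_t u'=\partial_{tt}u$ lie, respectively, two and one Sobolev orders below $F$ and $u'$, so a first-order Taylor expansion in time is simply not available in $H^s$; the interpolation step above is precisely what circumvents this, trading one full time derivative in a lower Sobolev space for the fractional gain $\xi^\gamma$ in $H^s$. Everything else — the exact powers of $(1+m_{s+2\gamma}(T))$ and the $t_n$-accumulation in $\mathcal F_{\tau,\xi,n}$ — is routine bookkeeping.
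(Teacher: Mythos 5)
Your proof is correct, and it reaches the same intermediate Hölder-in-time estimates as the paper --- $\Vert E(t_n+\xi)-E(t_n)\Vert_{s+2}+\Vert F(t_n+\xi)-F(t_n)\Vert_s\lesssim \xi^\gamma$, $\Vert u(t_n+\xi)-u(t_n)\Vert_{s}+\Vert u'(t_n+\xi)-u'(t_n)\Vert_s\lesssim\xi^\gamma$, and $\Vert\mathcal F_{\tau,\xi,n}\Vert_s\lesssim(1+t_n)\tau^\gamma$ --- after which the two arguments coincide (bilinear expansion of $\Delta(|E|^2-|E|^2)$, telescoping in $L_F$, rectangle-rule accumulation in $\Delta L_E$). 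The route to those estimates is genuinely different, though. The paper works from the mild formulations \eqref{mdiff}--\eqref{mdiff2} and extracts the fractional gain from the semigroup itself, via the multiplier bounds \eqref{sincosb2} such as $\Vert(1-\mathrm e^{i\xi\Delta})(-\xi\Delta)^{-\gamma}\Vert_s\leq 2$ applied to $(-\xi\Delta)^\gamma E(t_n)$; the extra $2\gamma$ derivatives are spent directly on the free flow, and the Duhamel terms contribute $O(\xi)$. You instead differentiate the equations to place $\partial_tF$ and $\partial_tu'$ in $H^{s-2+2\gamma}$ and $H^{s-1+2\gamma}$, and interpolate the Lipschitz bound there against boundedness in $H^{s+2\gamma}$ (resp.\ $H^{s+2+2\gamma}$); your check that the interpolation indices land exactly on $s$ and $s+2$ is the right one, and the degenerate case $\gamma>1/2$ for $u'$ (where the Lipschitz bound already lives above $H^s$) is harmless as you can then use the trivial embedding. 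Your approach is the more classical PDE-style argument and makes transparent why $2\gamma$ extra derivatives are the correct price for rate $\gamma$; the paper's multiplier approach is the standard idiom in the exponential-integrator literature and avoids invoking interpolation of Sobolev norms (which, for the paper's slightly nonstandard $\langle\nabla\rangle^s$ norms, still holds by H\"older on the Fourier side, but deserves the one-line remark). Your observation that the stated factor $t_n$ should really be $1+t_n$ (or $t_{n+1}$) is accurate --- the lemma as stated gives $0$ for $n=0$ --- and, as you say, immaterial for the use made of it in Theorem \ref{thm:s}.
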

\begin{proof}
In the following fix $0\leq \gamma \leq 1$ and let $c$ denote a constant
depending on $s$ and $d$ only.

The mild formulations \eqref{mildZo} and \eqref{eq:solZ} yield
\begin{equation}\label{mdiff}
\begin{split}
&E(t_n+\xi) - E(t_n) \\
=& \frac{e^{i \xi \Delta}-1}{(-
  \xi\Delta)^\gamma}(- \xi\Delta)^\gamma E(t_n) - i \int_0^\xi e^{i (\xi-\lambda)\Delta}  (uE)(t_n+\lambda) \dd \lambda,\\
& F(t_n+\xi) - F(t_n) \\
=& \frac{e^{i \xi \Delta}-1}{(-
   \xi\Delta)^\gamma} (- \xi\Delta)^\gamma F(t_n)- i \int_0^\xi e^{i
   (\xi-\lambda)\Delta}  (uF + u'E)(t_n+\lambda)\dd \lambda,
\end{split}
\end{equation}
and
\begin{equation}\label{mdiff2}
\begin{split}
&\nabo( u(t_n+\xi) - u(t_n)) \\
=& \frac{\cos(\xi\nabo) -1}{ (\xi \nabo)^\gamma} \xi^\gamma \nabo^{1+\gamma} u(t_n) +\frac{\sin(\xi\nabo)}{(\xi \nabo)^\gamma} (\xi \nabo)^{\gamma} u'(t_n) \\&+ \int_0^\xi \sin((\xi-\lambda)\nabo) \Delta \vert E(t_n+\xi)\vert^2,\\
& u'(t_n+\xi) - u'(t_n) \\ =&
  \frac{\cos(\xi\nabo) -1}{ (\xi \nabo)^\gamma} (\xi \nabo)^\gamma u'(t_n)
  - \frac{\sin(\xi\nabo)}{(\xi \nabo)^\gamma}  \xi^\gamma \nabo^{1+\gamma}u(t_n)
  \\&+ \int_0^\xi \cos((\xi-\lambda)\nabo) \Delta \vert E(t_n+\xi)\vert^2.
\end{split}
\end{equation}
Note that
\begin{equation}\label{sincosb2}
\left \Vert  \frac{\sin(\xi\nabo)}{(\xi\nabo)^\gamma}\right\Vert_s \leq 1,\quad\left \Vert  \frac{1-\cos(\xi\nabo)}{(\xi\nabo)^\gamma} \right\Vert_s \leq 2, \quad\left \Vert \frac{1-e^{i\xi\Delta}}{(-\xi \Delta)^\gamma} \right \Vert_s\leq 2.
\end{equation}
Plugging \eqref{sincosb2} into \eqref{mdiff2} yields for $0\leq \xi \leq 1$
\begin{equation}\label{diffuup}
\begin{aligned}
 &\Vert u(t_n+\xi)-u(t_n)\Vert_{s+1}+ \Vert u'(t_n+\xi)-u'(t_n)\Vert_s \\
 &\leq c \xi^\gamma (\Vert u(t_n)\Vert_{s+1+\gamma} + \Vert u'(t_n)\Vert_{s+\gamma}) + c \xi m_s(T)^2 \\&\leq c \xi^\gamma  (1+m_{s+\gamma}(T))^2.
\end{aligned}
\end{equation}
We have
\begin{equation}\label{fracDF}
\begin{aligned}
\Vert (-\Delta)^\gamma F(t)\Vert_s &= \Vert (-\Delta)^\gamma E'(t)\Vert_s 
\\&\leq \Vert (-\Delta)^{\gamma+1} E(t)\Vert_s +c \Vert (-\Delta)^\gamma u(t)\Vert_s \Vert (-\Delta)^\gamma E(t)\Vert_s.
\end{aligned}
\end{equation}
Hence, plugging \eqref{sincosb2}  and \eqref{fracDF} into \eqref{mdiff} yields for $0 \leq \xi \leq 1$
\begin{equation}\label{diffEF}
\begin{aligned}
&\Vert E(t_n+\xi)-E(t_n)\Vert_{s+2}+ \Vert F(t_n+\xi)-F(t_n)\Vert_s\\ &\leq  c \xi^\gamma \Vert E(t_n)\Vert_{s+2+2\gamma}  + c \xi^\gamma (1+m_s(T))^2 + c \xi(1+m_s(T))^3\\
&\leq c \xi^\gamma (1+m_{s+2\gamma}(T))^3.
\end{aligned}
\end{equation}

\emph{(i) Local errors $\nabs L_u^n$ and $L_{u'}^n$:} By the definition of $\nabs L_u^n$ and $L_{u'}^n$ in \eqref{eq:locErr} we obtain with the aid of \eqref{diffEF} that
\begin{equation}\label{Lu}
\begin{aligned}
&\Vert \nabs L_u^n\Vert_s + \Vert L_{u'}^n\Vert_s \leq c \tau \sup_{0\leq \xi \leq \tau} \Vert \Delta (\vert E(t_n+\xi)\vert^2 - \vert E(t_n)\vert^2)\Vert_s \\
&\leq c \tau m_s(T)  \sup_{0\leq \xi \leq \tau}  \Vert E(t_n+\xi)-E(t_n)\Vert_{s+2} \leq c \tau^{1+\gamma}(1+m_{s+2\gamma}(T))^4.
\end{aligned}
\end{equation}

\emph{(iii) Local errors $L_F^n$ and $\Delta L_E^n$:} By the definition of $\Delta L_E^n$ and $ L_F^n$ in \eqref{eq:locErr} we have
\begin{equation}
\begin{split}\label{llerr}
 \Vert \Delta L_E^n\Vert_s & \leq c m_s(T) \Vert \int_0^{t_n+\tau} F(\lambda) \dd\lambda - \tau \sum_{k=0}^n F(t_{k+1})\Vert_s,\\
 \Vert L_F^n\Vert_s 
& \leq c  \int_0^\tau m_s(T)\Big(\Vert F(t_n+\xi)-F(t_n)\Vert_s + \Vert  \mathcal{F}_{\tau,\xi,n}\Vert_s \Big) \\
&\qquad+ (1+m_s(T))^2\Big( \Vert u(t_n+\xi)-u(t_n)\Vert_s \\
&\quad\qquad+ (1+t_n)\Vert u'(t_n+\xi)-u'(t_n)\Vert_s \Big) \dd \xi,
\end{split}
\end{equation}
cf.\ \eqref{intF} for the definition of $ \mathcal{F}_{\tau,\xi,n}$.
Note that by \eqref{mdiff} we obtain for $0 \leq \xi \leq \tau$ that
\begin{equation*}\label{LF}
\begin{aligned}
  \mathcal{F}_{\tau,\xi,n}&=\int_0^{t_n+\xi}F(\lambda)\dd\lambda  -\tau \sum_{k=0}^n F(t_k) \\&= \sum_{k=0}^{n-1}\int_0^\tau \big(F(t_k+\lambda)-F(t_k)\big)\dd\lambda + \int_0^\xi F(t_n+\lambda) \dd \lambda - \tau F(t_n)\\&
 = \sum_{k=0}^{n-1}\int_0^\tau \frac{e^{i\lambda \Delta}-1}{(-\lambda \Delta)^{\gamma}}(-\lambda\Delta)^\gamma F(t_k)\dd\lambda +\mathcal{R}_{\tau,n}^F,
\end{aligned}
\end{equation*}
where
\begin{equation}
\begin{aligned}
\Vert \mathcal{R}_{\tau,n}^F\Vert_s & \leq \Vert \sum_{k=0}^{n-1} \int_0^\tau \int_0^\lambda e^{i(\lambda-\xi)\Delta} (uF + u' E)(t_n+\xi)\dd \xi \dd \lambda\Vert_s+ \tau cm_s(T)^2
\\&\leq \tau c t_n (1+m_s(T))^3.
\end{aligned}
\end{equation}
Hence, \eqref{sincosb2} together with \eqref{fracDF} implies that for $0 \leq \xi \leq \tau$
\begin{equation}
\begin{aligned}\label{bbF}
 \Vert  \mathcal{F}_{\tau,\xi,n}\Vert_s &\leq c \tau^\gamma t_n m_{s+2\gamma}(T)+c \tau^\gamma  t_n (1+m_s(T))^3.
 \end{aligned}
\end{equation}
Plugging \eqref{diffuup}, \eqref{diffEF} and \eqref{bbF} into \eqref{llerr} we obtain that
\begin{equation}\label{LF-2}
\tau \Vert \Delta L_E^n \Vert_s + \Vert L_F^n\Vert_s \leq c \tau^{1+\gamma}t_n(1+m_{s+2\gamma}(T))^4.
\end{equation}
Collecting the results in \eqref{Lu} and \eqref{LF-2} yields the assertion.
\end{proof}
Recall the definition of the operator $O_\tau$ from \eqref{eq:o}.
\begin{lemma}[Stability lemma]\label{lem:stab} Let $s \in \mathbb{R}$.
For $0<\tau\leq \tau_0$, $1\leq k_0\leq k\leq n$ let $P_{\tau,k}\in \mathcal{L}((H^s(\T^d)^2)$ such that
\[q:=\max_{1\leq k\leq n}\sup_{0<\tau\leq \tau_0}\|\tau^{-1} P_{\tau,k}\|_{\mathcal{L}((H^s(\T^d))^2)}<\infty.\]
Then, for all $(f,g) \in (H^s(\T^d))^2$
\[
\Big\|\prod_{k=k_0}^n (O_{\tau}+P_{\tau,k})\begin{pmatrix}f\\g\end{pmatrix} \Big\|_s\leq e^{n \tau (1+q)}\Big\|\begin{pmatrix}f\\g\end{pmatrix}\Big\|_s.
\]
\end{lemma}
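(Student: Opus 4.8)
The proof rests on showing that the ``wave block'' $O_\tau$ is a Fourier multiplier which is an \emph{isometry on $(H^s(\T^d))^2$ up to an error of size $O(\tau)$}; once this is in hand, the product estimate is immediate from submultiplicativity of the operator norm. (Note that $s\in\R$ is arbitrary here, since only Plancherel's theorem is used, not the algebra property.) On the $k$-th Fourier mode, $k\in\Z^d$, the operator $O_\tau$ acts as the $2\times2$ matrix
\[
O_\tau(k)=\begin{pmatrix}\cos(\tau|k|) & \sin(\tau|k|)\,\frac{\langle k\rangle}{|k|}\\[1ex] -\sin(\tau|k|)\,\frac{|k|}{\langle k\rangle} & \cos(\tau|k|)\end{pmatrix}\qquad(k\neq 0),
\]
where $\langle k\rangle$ denotes the Fourier symbol of $\nabs$, while $O_\tau(0)=\begin{pmatrix}1 & \tau\\ 0 & 1\end{pmatrix}$ (reading the entry $\sino\,\frac{\nabs}{\nabo}$ as $\frac{\sino}{\nabo}\,\nabs$ at the zero mode). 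Since $(H^s(\T^d))^2$ carries the product Hilbert-space structure, Plancherel's theorem gives, for any Fourier multiplier $M$ with $2\times2$ matrix symbol $M(k)$, that $\|M\|_{\mathcal L((H^s(\T^d))^2)}=\sup_{k\in\Z^d}\|M(k)\|$ (Euclidean operator norm on $\mathbb{C}^{2\times2}$), and likewise for the products $\prod_{k=k_0}^{n}(O_\tau+P_{\tau,k})$. Hence it suffices to prove $\sup_{k\in\Z^d}\|O_\tau(k)\|\le 1+\tau$.

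This pointwise bound is the heart of the matter. Since $\det O_\tau(k)=\cos^2(\tau|k|)+\sin^2(\tau|k|)=1$, the eigenvalues $\lambda_\pm$ of $O_\tau(k)^*O_\tau(k)$ satisfy $\lambda_+\lambda_-=1$ and $\lambda_++\lambda_-=\mathrm{tr}\,O_\tau(k)^*O_\tau(k)=2+\varepsilon_k$, where
\[
\varepsilon_k:=\sin^2(\tau|k|)\Big(\frac{\langle k\rangle}{|k|}-\frac{|k|}{\langle k\rangle}\Big)^2=\frac{\sin^2(\tau|k|)\,(\langle k\rangle^2-|k|^2)^2}{|k|^2\langle k\rangle^2}.
\]
Because $\langle k\rangle\ge\max(|k|,1)$ and $\langle k\rangle^2-|k|^2\le 1$ (which holds for $\langle k\rangle=(1+|k|^2)^{1/2}$ and equally for $\langle k\rangle=|k|$ on nonzero modes), one has $0\le\varepsilon_k\le|k|^{-2}\sin^2(\tau|k|)\le\tau^2$. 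Since $g(\varepsilon):=1+\tfrac{\varepsilon}{2}+\sqrt{\varepsilon+\tfrac{\varepsilon^2}{4}}$ is increasing in $\varepsilon\ge0$,
\[
\|O_\tau(k)\|^2=\lambda_+=g(\varepsilon_k)\le g(\tau^2)=1+\frac{\tau^2}{2}+\tau\sqrt{1+\frac{\tau^2}{4}}\le(1+\tau)^2,
\]
the last inequality amounting to $\sqrt{1+\tau^2/4}\le 2+\tau/2$. For $k=0$ one has $\|O_\tau(0)\|=\tfrac12(\tau+\sqrt{\tau^2+4})\le1+\tau$ directly. Hence $\|O_\tau\|_{\mathcal L((H^s(\T^d))^2)}\le1+\tau$. (When $\langle k\rangle=|k|$, the matrix $O_\tau(k)$ is, for $k\ne0$, exactly the rotation by $\tau|k|$; in general it is conjugate to that rotation via $\mathrm{diag}(\sqrt{\langle k\rangle/|k|},\sqrt{|k|/\langle k\rangle})$, which explains why the defect from unitarity is only $O(\tau)$.)

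To conclude, I would combine this with the hypothesis $\|P_{\tau,k}\|_{\mathcal L((H^s(\T^d))^2)}\le\tau q$: for each $k$ with $k_0\le k\le n$,
\[
\|O_\tau+P_{\tau,k}\|_{\mathcal L((H^s(\T^d))^2)}\le(1+\tau)+\tau q\le(1+\tau)(1+\tau q)\le e^{\tau}e^{\tau q}=e^{\tau(1+q)},
\]
using $1+x\le e^x$. By submultiplicativity of the operator norm over the $n-k_0+1\le n$ factors,
\[
\Big\|\prod_{k=k_0}^{n}(O_\tau+P_{\tau,k})\Big\|_{\mathcal L((H^s(\T^d))^2)}\le e^{(n-k_0+1)\tau(1+q)}\le e^{n\tau(1+q)},
\]
and applying this operator to an arbitrary $(f,g)\in(H^s(\T^d))^2$ gives the stated estimate.

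The one step that requires an idea is the pointwise bound $\|O_\tau(k)\|\le1+\tau$: one must see that the off-diagonal multipliers $\nabs/\nabo$ and $\nabo/\nabs$ in $O_\tau$ — which prevent it from being exactly unitary — are harmless, since they deviate from the identity only near the bottom of the spectrum (by $O(|k|^{-2})$), while the factor $\sino$ supplies the compensating $\tau$-gain. This is the stability counterpart of the reformulation of Section~2 that removed the loss of derivatives. The Plancherel reduction, the triangle inequality, and the iteration are then routine.
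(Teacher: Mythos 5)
Your argument is correct, and it reaches the conclusion by a genuinely different route from the paper's. You prove $\Vert O_\tau\Vert_{\mathcal{L}((H^s(\T^d))^2)}\leq 1+\tau$ by direct symbol calculus: on each mode you compute the largest singular value of the $2\times 2$ matrix $O_\tau(k)$ from $\det O_\tau(k)=1$ and a trace identity, observe that the unitarity defect $\varepsilon_k$ is at most $|k|^{-2}\sin^2(\tau|k|)\leq\tau^2$, treat the zero mode (a Jordan block $\left(\begin{smallmatrix}1&\tau\\0&1\end{smallmatrix}\right)$) separately, and then finish by the triangle inequality and submultiplicativity of the operator norm. The paper instead writes $O_\tau=V^{-1}\mathrm{diag}(e^{i\tau\nabo},e^{-i\tau\nabo})V+Z_\tau$ with $V=\tfrac{1}{\sqrt 2}\left(\begin{smallmatrix}1&i\\i&1\end{smallmatrix}\right)$ unitary, so the first summand is an exact isometry, and notes that with the paper's convention $\nabs f=\nabo f+\hat f(0)$ the entire $\nabs$--$\nabo$ mismatch is concentrated in $Z_\tau$, which acts only on the zero mode and has norm at most $\tau$; it then absorbs $Z_\tau+P_{\tau,k}$ as a perturbation of size $\tau(1+q)$ and iterates, arriving at the same factor $e^{n\tau(1+q)}$. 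Both proofs hinge on the same phenomenon — the off-diagonal multipliers deviate from a rotation only where $\nabs\neq\nabo$, and there $\sino$ supplies the compensating factor of $\tau$ — but yours is somewhat more robust: it needs no ansatz for the diagonalizer $V$ and, as you note, works verbatim for the Bessel-potential convention $\langle k\rangle=(1+|k|^2)^{1/2}$, where the defect is spread over all low frequencies rather than sitting at $k=0$. One small imprecision: the products $\prod_{k=k_0}^n(O_\tau+P_{\tau,k})$ are \emph{not} Fourier multipliers (the $P_{\tau,k}$ contain multiplication operators), so the identity $\Vert M\Vert=\sup_k\Vert M(k)\Vert$ does not extend to them ``likewise''; this does not affect your proof, since the final step only uses the multiplier bound for $O_\tau$ itself together with operator-norm submultiplicativity.
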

\begin{proof}
Let
\[
V=\frac{1}{\sqrt{2}}\begin{pmatrix}
1& i\\
i&1
\end{pmatrix}, \; Z_\tau:=\begin{pmatrix}
0& \tau \frac{\sin(\tau \nabo)}{\tau \nabo}(\nabs -\nabo)\\
\frac{\sin(\tau \nabo)}{\tau \nabs}(\nabs -\nabo)&0
\end{pmatrix}
\]
We have
\[
O_\tau =V^{-1}\mathrm{diag}(e^{i\tau \nabo},e^{-i\tau \nabo}) V + Z_\tau.
\]
Note that the action of $Z_\tau$ is nothing but multiplication  by $\tau$ of the zero mode of the second component. For $Q_{\tau,k}=V(Z_\tau+P_{\tau,k}) V^{-1}$ we obtain
\[
\prod_{k=k_0}^n (O_{\tau}+P_{\tau,k}) =
V^{-1}\Big\{\prod_{k=k_0}^n\Big(\mathrm{diag}(e^{i\tau \nabo},e^{-i\tau \nabo})+Q_{\tau,k}\Big)\Big\} V.
\]
Hence,
\begin{equation}\label{eq:ub}
\Big\|\prod_{k=k_0}^n (O_{\tau}+P_{\tau,k})\begin{pmatrix}f\\g\end{pmatrix} \Big\|_s \leq \prod_{k=k_0}^n
\Big(1+\|Q_{\tau,k}\|_{\mathcal{L}((H^s(\T^d))^2)}\Big)\Big\|\begin{pmatrix}f\\g\end{pmatrix} \Big\|_s
\end{equation}
Due to
\[
\|Q_{\tau,k}\|_{\mathcal{L}((H^s(\T^d))^2)}\leq \|Z_\tau +P_{\tau,k}\|_{\mathcal{L}((H^s(\T^d))^2)}\leq \tau+\tau q
\]
we conclude that
\[
\prod_{k=k_0}^n \Big(1+\|Q_{\tau,k}\|_{\mathcal{L}((H^s(\T^d))^2)}\Big)\leq \Big(1+\tau(1+q)\Big)^n\leq  e^{n \tau (1+q)},
\]
and the claim follows from \eqref{eq:ub}.
\end{proof}
\subsection{Error analysis for strong solutions and in the energy space}\label{sec:SE}
\begin{remark}\label{rem:convSEhreg}
As lower order Sobolev norms are controlled by higher order Sobolev norms Theorem \ref{thm:s} also yields a convergence result for strong solutions (i.e., in $H^2(\mathbb{T}^d)\times H^1(\mathbb{T}^d)\times L^2(\mathbb{T}^d)$) as well as in the energy space (i.e., in $H^1(\mathbb{T}^d)\times L^2(\mathbb{T}^d)\times H^{-1}(\mathbb{T}^d)$). More precisely, assume that for some $\gamma > 0$ the regularity assumptions \eqref{regTaylor}  hold with $s = d/2+\varepsilon$ for any $\varepsilon>0$. Then there exists a $\tau_0>0$ such that for all $0\leq \tau\leq \tau_0$ and $t_n = n\tau \leq T$ the following convergence bounds hold:
\begin{equation}\label{conv2o}
\|(E(t_n)-E^n, u(t_n)-u^n,u'(t_n)-u'^n)\|_{[r]} \leq c \tau^\gamma   ,\quad r = -1,0.
\end{equation}
However,  the regularity assumptions on the data are quite strong.
\end{remark}

In the following we will show that in dimensions $d \leq 3$ the regularity assumptions \eqref{regTaylor} with $s= \mathrm{max}(1,d/2+\varepsilon)$  actually imply \emph{first-order convergence}, i.e., \eqref{conv2o} holds with $\gamma = 1$. Here, we apply asymmetric product estimates and in order to control the error of $F$ and $u'$ in $L^2(\mathbb{T}^d)$ and $H^{-1}(\mathbb{T}^d)$, respectively, we need a priori bounds on the numerical solutions in higher order Sobolev spaces, cf.\ \cite{Gau15,Lubich08}.

We will carry out the error analysis in detail only for the energy space as the result for strong solutions follows along the same lines. Furthermore, for the sake of clarity of the exposition, we restrict ourselves to dimensions $d\leq 3$, where the following product estimates are crucial for our analysis: For $s_1+s_2\geq 0$ and $1\leq d \leq 3$ we have
\begin{equation}\label{negbiest}
\begin{aligned}
& \Vert f g \Vert_{s} \leq c \Vert f\Vert_{s_1} \Vert g \Vert_{s_2} \quad \text{ for  all } s \leq s_1+s_2-\textstyle\frac{d}{2}\quad \text{ with } s_1,s_2 \text{ and } -s \neq\textstyle \frac{d}{2}\\
&\Vert f g \Vert_{s} \leq c \Vert f\Vert_{s_1} \Vert g \Vert_{s_2} \quad \text{ for  all } s < s_1+s_2-\textstyle\frac{d}{2}\quad \text{ with } s_1,s_2 \text{ or } -s =\textstyle \frac{d}{2}
\end{aligned}
\end{equation}
such that in particular we obtain for $1\leq d\leq 3$ and $\varepsilon>0$ that
\begin{equation}\label{hm1E}
\Vert f g \Vert_{-1}\leq c \Vert f \Vert_{-1} \Vert g \Vert_{\mathrm{max}(d/2+\varepsilon,1)}.
\end{equation}

\begin{theorem}\label{thm:e}
Fix $1\leq d \leq 3$ and $\gamma > 0$. For any $T\in (0,\infty)$ and any $\varepsilon>0$, suppose that for $\delta := \mathrm{max}(d/2+\varepsilon,1)$
\begin{equation*}
E \in \mathcal{C}([0,T];H^{2+\delta+2\gamma}(\mathbb{T}^d)),\quad u \in \mathcal{C}([0,T]; H^{1+\delta+2\gamma}(\mathbb{T}^d)) \cap \mathcal{C}^1([0,T]; H^{\delta+2\gamma}(\mathbb{T}^d))
\end{equation*}
is a mild solution of \eqref{eq:ZakO} with
 \begin{equation}\label{rege}
 m_{\delta+2\gamma}(T) := \sup_{t\in [0,T]} \|(E(t),u(t),u'(t))\|_{[\delta+2\gamma]}<\infty.
 \end{equation}
Then, there exists $\tau_0>0$ such that for all $0\leq \tau\leq \tau_0$ and $t_n = n\tau \leq T$ the trigonometric time-integration scheme \eqref{eq:numZ} is first-order convergent in the energy space, i.e.,
$$ 
\|(E(t_n)-E^n, u(t_n)-u^n,u'(t_n)-u'^n)\|_{[-1]} \leq c\tau ,
$$
where $c$ depends only on $m_\delta(T)$, $T$ and  $d$.
\end{theorem}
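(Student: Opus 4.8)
The plan is to mirror the proof of Theorem \ref{thm:s} but at the lower regularity level $r=-1$, using the asymmetric product estimate \eqref{hm1E} wherever the previous argument invoked the $H^s$-algebra property. The decisive new difficulty is that the error recursions for $F$ and $u'$ involve terms like $(u(t_n)-u^n)F^n$, $(u'(t_n)-u'^n)S_F^n$ and $\Delta(|E(t_n)|^2-|E^n|^2)$: estimating these in $H^{-1}$ by \eqref{hm1E} costs a factor $\|F^n\|_\delta$, $\|S_F^n\|_\delta$ or $\|E^n\|_{2+\delta}$, i.e.\ a \emph{higher-order} Sobolev norm of the numerical solution. Hence, before one can close the $H^{-1}$ error estimate, one must establish a priori bounds on the numerical iterates in the stronger norms $\|F^n\|_\delta$, $\|E^n\|_{2+\delta}$, $\|u^n\|_{1+\delta}$, $\|u'^n\|_\delta$ and $\|S_F^n\|_\delta$, uniformly in $n$ with $t_n\le T$. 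This is exactly the role of the hypothesis \eqref{rege} at regularity level $\delta+2\gamma$: applying Theorem \ref{thm:s} with $s=\delta$ (legitimate since $\delta>d/2$ and $0<\gamma\le 1$, and if $\gamma>1$ one truncates to $\gamma=1$) gives convergence of the scheme in $\|\cdot\|_{[\delta]}$, hence $m_\delta^n\le 2m_\delta(T)$ for $0<\tau\le\tau_0$, which furnishes precisely the required uniform bounds $\|F^n\|_\delta+\|E^n\|_{2+\delta}+\|u^n\|_{1+\delta}+\|S_F^n\|_\delta\le C(m_\delta(T),T,d)$.

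With these a priori bounds in hand, the second step is to rerun the error analysis of Theorem \ref{thm:s} with $s$ replaced by $-1$. One subtracts \eqref{eq:numZ} from \eqref{eq:solZ} and obtains the same error recursions \eqref{ErrorrecTaylor-f}–\eqref{ErrorrecTaylor-e}, now to be read in $H^{-1}$. In the $F$-recursion \eqref{ErrorrecTaylor-f}, each bilinear term is split by \eqref{hm1E}: e.g.\ $\|(u(t_n)-u^n)F^n\|_{-1}\le c\|u(t_n)-u^n\|_{-1}\|F^n\|_{\delta}$ if one is careful about which factor carries the negative index — in fact $\|u(t_n)-u^n\|_{-1}$ is controlled via the $u$-norm $\|\cdot\|_{[-1]}$, while $F^n$ carries the $\delta$-bound; symmetrically $\|(u'(t_n)-u'^n)S_F^n\|_{-1}\le c\|u'(t_n)-u'^n\|_{-1}\|S_F^n\|_\delta$. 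The operators $e^{i\tau\Delta}$ and $(i\tau\Delta)^{-1}(1-e^{i\tau\Delta})$ are bounded on every $H^s$, so \eqref{stabF} still applies. For the wave part, one forms the same vector $(\nabs(u(t_{n+1})-u^{n+1}),u'(t_{n+1})-u'^{n+1})$ — now in $H^{-1}$, i.e.\ effectively measuring $u$ in $H^0$ and $u'$ in $H^{-1}$ — uses the representation \eqref{nabE} for $E(t_n)-E^n$ in terms of the wave error, and estimates $\Delta(|E(t_n)|^2-|E^n|^2)$ in $H^{-1}$ via \eqref{deltaE} and \eqref{hm1E}, where the $H^\delta$-a priori bound on $E^n$ (and $m_\delta(T)$ on $E(t_n)$) absorbs the higher-index factor while the low-index factor is the $E$-error in $H^{1+\delta}\hookrightarrow$ ... — more precisely one pairs $H^{-1}$ against $H^{\delta}$ throughout. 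The perturbation operators $P_{\tau,k}$ defined as in \eqref{eq:p}–\eqref{def:p} obey, via \eqref{eq:eta} with $s-1$ replaced by the appropriate negative index and $\eta(t_k)$ controlled by the $H^\delta$-a priori bound, a bound $\|\tau^{-1}P_{\tau,k}\|_{\mathcal{L}((H^{-1})^2)}\le q$ with $q=q(m_\delta(T),T,d)$, and Lemma \ref{lem:stab} (valid for all $s\in\mathbb{R}$, in particular $s=-1$) yields the stability bound $\|\prod_{k=k_0}^n(O_\tau+P_{\tau,k})\|_{\mathcal{L}((H^{-1})^2)}\le e^{t_n(1+q)}$.

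The third step handles the local errors. One repeats Lemma \ref{lem:locerror} with $s$ replaced by $-1$ and with the regularity-consuming factor $\xi^\gamma$ replaced, since now $\gamma$ may be taken $=1$, by $\xi$; concretely one re-derives \eqref{diffuup} and \eqref{diffEF} in the norms $\|\cdot\|_{[-1]}$-scale, using \eqref{sincosb2}, \eqref{fracDF} (with $(-\Delta)^\gamma=(-\Delta)$) and \eqref{hm1E} for the quadratic term, and obtains $\max_k\{\|L_F^k\|_{-1}+\|\nabs L_u^k\|_{-1}+\|L_{u'}^k\|_{-1}+\tau\|\Delta L_E^k\|_{-1}\}\le c\tau^2 t_n(1+m_{\delta+2}(T))^4$; note that the $F$-derivative estimate \eqref{fracDF} at level $\delta$ is exactly what \eqref{rege} provides, and for $\gamma>1$ the data are even smoother so the bound only improves. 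Assembling, one gets the analogues of \eqref{eq:f-b}, \eqref{eq:e-b}, \eqref{eq:u-b} in $\|\cdot\|_{[-1]}$ with $\tau^\gamma$ replaced by $\tau$ and with the coefficient functions $A_1,A_2$ now depending on $m_\delta(T)$ (through the a priori bounds) and on $m_{\delta+2}(T)$ (through the local errors), then solves the scalar recursion for $\max_k\|F(t_k)-F^k\|_{-1}$ by the discrete Gronwall estimate exactly as in \eqref{eq:f-fb}. No continuity/bootstrap argument is needed this time, since the a priori bounds on $m_\delta^n$ are already available from Theorem \ref{thm:s}; one simply reads off $\|(E(t_n)-E^n,u(t_n)-u^n,u'(t_n)-u'^n)\|_{[-1]}\le c\tau$ with $c=c(m_\delta(T),T,d)$. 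The main obstacle, as indicated above, is organizing the asymmetric estimates so that in \emph{every} bilinear term the negative Sobolev index lands on the error factor and the $H^\delta$ a priori bound covers the other factor — this requires checking the index arithmetic in \eqref{hm1E} term by term, but involves no genuinely new idea beyond the uniform $H^\delta$ control of the scheme.
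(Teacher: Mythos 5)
Your proposal follows the paper's own proof essentially step for step: first obtain uniform a priori bounds $m_\delta^n\le 2m_\delta(T)$ on the iterates by applying Theorem \ref{thm:s} with $s=\delta$ (truncating $\gamma$ to $1$ if necessary), then rerun the error recursions \eqref{ErrorrecTaylor-f}--\eqref{ErrorrecTaylor-e} at the $[-1]$-level with the asymmetric estimate \eqref{hm1E} placing the negative index on the error factor and the $H^\delta$ bound on the other factor, invoke Lemma \ref{lem:stab} at $s=-1$ for the wave part, and combine with an $O(\tau^2)$ local error lemma (the paper's Lemma \ref{lem:locerrorE}), with no further bootstrap needed. The only slip is in bookkeeping: the local error constant should involve $m_\delta(T)$ rather than $m_{\delta+2}(T)$ --- the hypothesis \eqref{rege} would not supply the latter for small $\gamma$, but the estimates you actually invoke only require $\Vert E\Vert_3\le\Vert E\Vert_{2+\delta}$, which holds since $\delta\ge 1$.
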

\begin{proof} In this proof we proceed similarly to the proof of Theorem \ref{thm:s}. However, we need to be more careful when estimating the nonlinear terms. In the following fix $1 \leq d \leq 3$, $\varepsilon,\gamma >0$ and set $\delta = \mathrm{max}(d/2+\varepsilon,1)$. First note that the regularity assumptions \eqref{rege} together with Theorem \ref{thm:s} (choosing $s=\delta$) imply that there exists a $\tau_0>0$ such that for all $0 \leq \tau \leq \tau_0$ and $t_n = n\tau \leq T$ we have
\begin{equation}\label{numb}
m_\delta^n := \max_{0\leq k \leq n}\{\Vert E^k\Vert_{2+\delta} + \Vert F^k\Vert_\delta+ \Vert u^k\Vert_{1+\delta} + \Vert u'^k\Vert_\delta\} \leq 2 m_{\delta}(T)< \infty.
\end{equation}

In the following we assume that $\tau \leq \tau_0$ such that \eqref{numb}  holds. Furthermore, we denote by $c$ a constant depending only on $m_{\delta}(T)$, $T$, $d$ and prove the claim for $n + 1$ instead of $n$.

The regularity assumptions \eqref{rege} imply that the local errors defined in \eqref{eq:locErr} satisfy
\begin{equation}\label{localErrE}
\max_{0\leq k \leq n} \big\{ \Vert L_F^k\Vert_{-1} + \Vert \nabs L_u^k\Vert_{-1} + \Vert L_{u'}^k\Vert_{-1} + \tau \Vert \Delta L_E^k\Vert_{-1}\big \} \leq c \tau^{2},
\end{equation}
see Lemma \ref{lem:locerrorE} below. In order to deduce first-order convergence globally from \eqref{localErrE} we need to analyze the stability of the integration scheme \eqref{eq:numZ} in the energy space.

\emph{(i) Error in $F$ in $H^{-1}$:} The error recursion in \eqref{ErrorrecTaylor-f} together with the stability bound \eqref{stabF}, the bilinear estimate \eqref{hm1E} and the local error bound \eqref{localErrE} yields that
\begin{equation*}
\begin{aligned}
&\Vert F(t_{n+1})-F^{n+1}\Vert_{-1} \leq \big (1+ \tau c t_n m_\delta(t_n)\big)\max_{0\leq k \leq n}\Vert F(t_k)-F^k\Vert_{-1} \\
&\qquad+ c \big( m_\delta(0)+ t_n  \max_{0\leq k \leq n} \Vert F^k\Vert_\delta\big) \Vert u'(t_n)-u'^n\Vert_{-1} + c \Vert F^n\Vert_{\delta} \Vert u(t_n)-u^n\Vert_1+ c \tau^2.
\end{aligned}
\end{equation*}
Furthermore, the a priori boundedness of the numerical solutions \eqref{numb}  implies that $ \max_{0\leq k \leq n} \Vert F^k\Vert_\delta \leq 2 m_\delta(t_n)< \infty$. Hence, we obtain that
\begin{equation}\label{errFE}
\begin{aligned}
&\Vert F(t_{n+1})-F^{n+1}\Vert_{-1} \leq\big (1+ \tau c)\max_{0\leq k \leq n}\Vert F(t_k)-F^k\Vert_{-1} \\& \qquad + \tau c  \big(\Vert u'(t_n)-u'^n\Vert_{-1} + \Vert u(t_n)-u^n\Vert_{-1}\big) +c \tau^2.
\end{aligned}
\end{equation}

\emph{(ii) Error in $E$ in $H^1$:} Similarly we obtain by the error recursion \eqref{ErrorrecTaylor-e} together with the bilinear estimate \eqref{hm1E}, the bound on the numerical solutions \eqref{numb} and the local error bound \eqref{localErrE} that
\begin{equation}\label{errEE}
\begin{aligned}
\Vert E(t_{n+1})& -E^{n+1}\Vert_{1} \leq c \max_{0\leq k \leq n} \Vert F(t_{k})-F^k\Vert_{-1} +c \Vert u(t_n)-u^n\Vert_{-1}+ c \tau.\\
\end{aligned}
\end{equation}
\emph{(iii) Error in $(u,u')$ measured in $L^2 \times H^{-1}$:} 
The a priori boundedness of the numerical solutions \eqref{numb} together with the bilinear estimate \eqref{hm1E} implies that for $0 \leq \alpha \leq 2$ we have
\begin{equation*}
\begin{aligned}
&\Vert (\nabo^{2-\alpha} E(t_n) + \nabo^{2-\alpha} E^n)\nabo^\alpha (E(t_n)-E^n)\Vert_{-1} 
\\&\qquad \leq c\Vert E(t_n) + E^n\Vert_{2+\delta} \Vert E(t_n) - E^n\Vert_1
\leq 2c m_{\delta}(t_n) \Vert E(t_n) - E^n\Vert_1.
\end{aligned}
\end{equation*}
Thus, similarly to \eqref{rec:uup}  we obtain that
\begin{equation}
\begin{aligned}\label{rec:uupE}
\begin{pmatrix}
\nabs( u(t_{n+1}) - u^{n+1})\\[1ex] u'(t_{n+1}) - u'^{n+1}
\end{pmatrix} & = (O_\tau+P_{\tau,n})\begin{pmatrix}
\nabs( u(t_{n}) - u^{n})\\[1ex] u'(t_{n}) - u'^{n}
\end{pmatrix} +\mathcal{R}_{\tau,n},
\end{aligned}
\end{equation}
where
\begin{equation}\label{RE}
\Vert \mathcal{R}_{\tau,n} \Vert_{-1} \leq c\tau( \max_{0\leq k \leq n} \Vert F(t_k)-F^k\Vert_{-1} + \Vert \Delta L_E^n\Vert_{-1} )+ \Vert \nabs L_u^n\Vert_{-1} + \Vert L_{u'}^n\Vert_{-1}
\end{equation}
and $O_\tau$, $P_{\tau,n}$ are defined in \eqref{eq:o} and \eqref{eq:p}, respectively. The bilinear estimate \eqref{hm1E} together with the a priori boundedness of the numerical solutions \eqref{numb} and the definition of $\eta(t_k)$ in \eqref{bbu} furthermore implies that
\begin{equation*}
\begin{aligned}
\Vert  (\Delta E(t_k) +\Delta E^k)  \eta(t_k)\Vert_{-1} \leq c\Vert E(t_k)+ E^k\Vert_1 \Vert \eta(t_k)\Vert_\delta \leq 2 c(1+t_n)m_{\delta }^2(T).
\end{aligned}
\end{equation*}
Thus, by the definition of $p_k^j$ in \eqref{def:p} we have for all $f \in H^{-1}(\mathbb{T}^d)$ that
$$
\max_{j=1,2,3} \max_{0\leq k \leq n} \Vert p_k^j f\Vert_{-1} \leq c \Vert f \Vert_{-1}.
$$
Together with \eqref{sincosb} (which holds for all $s \in \mathbb{R}$) this yields by the definition of $P_{\tau,k}$ in \eqref{eq:p} that
$$
\max_{0\leq k \leq n} \sup_{0\leq \tau \leq \tau_0} \Vert \tau^{-1} P_{\tau,k}\Vert_{\mathcal{L}((H^{-1}(\mathbb{T}^d))^2)} \leq c.
$$
Hence, solving the error recursion in \eqref{rec:uupE} we obtain with the aid of the stability Lemma \ref{lem:stab}, the bound on $\mathcal{R}_{\tau,n}$ given in \eqref{RE} together with the local error bound \eqref{localErrE} that
\begin{equation}\label{erruupE}
\begin{split}
&\Vert u(t_{n+1}) - u^{n+1}\Vert_0 + \Vert u'(t_{n+1})-u'^{n+1}\Vert_{-1}\\
&\leq c  \max_{0\leq k \leq n} \Vert F(t_k)-F^k\Vert_{-1} +c \tau.
\end{split}
\end{equation}
Collecting the results in \eqref{errFE}, \eqref{errEE} and \eqref{erruupE} yields the assertion.
\end{proof}

\begin{remark}
Note that in the limit $\tau \to 0$ Theorem \ref{thm:e} (together with Remark \ref{rem:lwp}) implies first order-convergence in the energy space if for some $\varepsilon>0$
\begin{equation*}
\begin{aligned}
& \Vert (E(0),u(0),u'(0))\Vert_{[1+\varepsilon]} < \infty  \quad &\text{for }&d = 1,2,\\
&\Vert (E(0),u(0),u'(0))\Vert_{[3/2+\varepsilon]}< \infty \quad &\text{for }& d = 3.
\end{aligned}
\end{equation*}
\end{remark}
\begin{lemma}[Local error in the energy space]\label{lem:locerrorE}
Let $1\leq d \leq 3$. Then the local errors defined in \eqref{eq:locErr} satisfy for any $\varepsilon>0$ and $\delta \geq \mathrm{max}(d/2+\varepsilon,1)$
$$
\max_{0\leq k \leq n} \big\{ \Vert L_F^k\Vert_{-1} + \Vert \nabs L_u^k\Vert_{-1} + \Vert L_{u'}^k\Vert_{-1} + \tau \Vert \Delta L_E^k\Vert_{-1}\big \} \leq c \tau^{2}(1+m_{\delta}(T))^4,
$$
where $m_\delta(T)$ is defined in \eqref{rege} and $c$ depends on $T$ and $d$.
\end{lemma}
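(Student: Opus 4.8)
The statement to prove is Lemma \ref{lem:locerrorE}: the four local errors $L_F^k, \nabs L_u^k, L_{u'}^k, \tau\Delta L_E^k$ are all $O(\tau^2)$ when measured in $H^{-1}$, under the regularity hypothesis $m_\delta(T)<\infty$ for $\delta=\max(d/2+\varepsilon,1)$ in dimensions $d\le 3$. The plan is to revisit the proof of Lemma \ref{lem:locerror}, keeping its structure but replacing every naive algebra-estimate $\|fg\|_s\le c\|f\|_s\|g\|_s$ by the asymmetric estimates \eqref{negbiest}--\eqref{hm1E}, so that the ``bad'' low-regularity factor is put in $H^{-1}$ and the smooth factor absorbs all the derivatives in $H^\delta$ (or higher). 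The key point is that we now demand $2\gamma=0$-type smoothness is \emph{not} needed because we only ask for the $O(\tau^2)$ rate in the very weak norm $H^{-1}$: the two Taylor-remainder orders we gain ($\tau^1$ from one integration against the unit-speed propagators, and $\tau^1$ from Taylor-expanding the slowly varying factor once) already give $\tau^2$, provided the constants — which now involve $H^\delta$ and $H^{\delta+2}$ norms of $E,u,u',F$ — are finite, and that is exactly what $m_\delta(T)<\infty$ guarantees (note $F=\partial_t E$ so $\|F(t)\|_\delta$ and indeed $\|F(t)\|_{\delta+1}$ are controlled by $m_\delta(T)$ via \eqref{fracDF}-type estimates and the algebra property of $H^\delta$, since $\delta>d/2$).

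\textbf{Step 1: the differences $E(t_n+\xi)-E(t_n)$ etc., in the right norms.} Starting from \eqref{mdiff}--\eqref{mdiff2}, I would record that for $0\le\xi\le\tau\le 1$,
\[
\|u(t_n+\xi)-u(t_n)\|_{1}+\|u'(t_n+\xi)-u'(t_n)\|_{0}\le c\,\xi\,(1+m_\delta(T))^2,
\]
and similarly $\|E(t_n+\xi)-E(t_n)\|_{2}+\|F(t_n+\xi)-F(t_n)\|_{0}\le c\,\xi\,(1+m_\delta(T))^3$ — here, crucially, we take $\gamma=1$ in \eqref{sincosb2} (so the factor $(\xi\nabla)^{-1}$ etc. is harmless) and we only need one extra derivative on the data beyond the target norm, i.e. $H^{1+1}=H^2$ for $u$, $H^{2+2}$ for $E$, all bounded by $m_\delta(T)$ since $\delta\ge 1$. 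These are literally \eqref{diffuup} and \eqref{diffEF} with $\gamma=1$, and no further smoothness is required.

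\textbf{Step 2: the four local errors.} For $\nabs L_u^n$ and $L_{u'}^n$: by \eqref{eq:locErr} they are $\le c\tau\sup_\xi\|\Delta(|E(t_n+\xi)|^2-|E(t_n)|^2)\|_{-1}$; expand the difference of squares as in \eqref{deltaE} and estimate each of the three bilinear terms by \eqref{hm1E}, putting the factor carrying two derivatives (i.e. $\Delta(E(t_n+\xi)-E(t_n))$, which lives in $H^0$ by Step 1) in the $H^{-1}$-slot is wrong — rather, one puts the \emph{difference} factor in $H^{-1}$ after noting it actually sits in $H^0\subset H^{-1}$ with norm $\lesssim\tau$ (Step 1 gives $\|E(t_n+\xi)-E(t_n)\|_2\lesssim\tau$, hence $\|\Delta(E(t_n+\xi)-E(t_n))\|_0\lesssim\tau$), and the smooth factor $\overline{E(t_n)+E^n}$ in $H^\delta\subset H^{\max(d/2+\varepsilon,1)}$, so \eqref{hm1E} closes with a bound $c\tau^2(1+m_\delta(T))^4$ — here I would also invoke the a priori bound \eqref{numb} on $E^n$ (allowed, since Theorem \ref{thm:s} is proved and may be cited). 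For $L_F^n$ and $\tau\Delta L_E^n$ I would follow \eqref{llerr}--\eqref{LF-2} verbatim with $\gamma=1$ and the $H^{-1}$ norm: the new ingredient is the bound on $\mathcal{F}_{\tau,\xi,n}$, for which \eqref{bbF} with $\gamma=1$ gives $\|\mathcal{F}_{\tau,\xi,n}\|_{-1}\le c\tau t_n(1+m_\delta(T))^3$ (using $\|(-\Delta)F(t_k)\|_{-1}\le\|F(t_k)\|_1\le\|E(t_k)\|_{3}+c\|u(t_k)\|_1\|E(t_k)\|_{1}\lesssim m_\delta(T)$, valid since $\delta\ge 1$ and $H^\delta$ is an algebra), and then \eqref{llerr} with the weak norm and the asymmetric estimate \eqref{hm1E} in the prefactors $(1-u(t_{n+1}))\cdot$ and $(1+m_s)^2(\cdot)$ yields $\tau\|\Delta L_E^n\|_{-1}+\|L_F^n\|_{-1}\le c\tau^2 t_n(1+m_\delta(T))^4$. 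Collecting gives the claim.

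\textbf{Main obstacle.} The one place requiring genuine care is the bilinear bookkeeping in the $L_F^n$ term: the integrand of $L_F^n$ in \eqref{eq:locErr} contains $u'(t_n+\xi)\mathcal I_F(t_n+\xi)$ and $u'(t_n)\,(E(0)+\tau\sum F(t_k))$, and $\mathcal I_F$ itself grows linearly in $t_n$ (that is the source of the $t_n$ in \eqref{bbF}); one must check that when the difference is taken, the \emph{growing} factor $\mathcal I_F$ always multiplies a \emph{small} factor ($u'(t_n+\xi)-u'(t_n)$ of size $\tau$, or $\mathcal F_{\tau,\xi,n}$ of size $\tau t_n$ times $u'$), so that $H^{-1}\times H^\delta\to H^{-1}$ never loses a derivative and never produces worse than $\tau^2 t_n$. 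Everything else is the routine substitution $s\leadsto -1$, $\gamma\leadsto 1$, and "algebra estimate $\leadsto$ \eqref{hm1E}" in the already-completed proof of Lemma \ref{lem:locerror}.
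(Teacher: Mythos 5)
Your overall architecture is the right one and matches the paper's: redo Lemma \ref{lem:locerror} with $\gamma=1$, measure everything in $H^{-1}$, and replace the algebra estimate by the asymmetric product estimate \eqref{hm1E} so that the ``difference'' factor always sits in the low-regularity slot. However, Step 1 contains a step that fails under the stated hypotheses. You claim
\[
\Vert E(t_n+\xi)-E(t_n)\Vert_{2}+\Vert F(t_n+\xi)-F(t_n)\Vert_{0}\leq c\,\xi\,(1+m_\delta(T))^3,
\]
justified by ``$H^{2+2}$ for $E$, all bounded by $m_\delta(T)$ since $\delta\ge 1$''. Extracting a full factor $\xi$ from $e^{i\xi\Delta}-1$ costs two derivatives, so the first bound requires $\Vert \Delta E(t_n)\Vert_2=\Vert E(t_n)\Vert_4$, i.e.\ $E(t)\in H^4$; likewise $\Vert F(t_n+\xi)-F(t_n)\Vert_0\lesssim \xi$ requires $\Vert F\Vert_2\lesssim\Vert E\Vert_4+\dots$. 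But $m_\delta(T)$ only controls $\Vert E(t)\Vert_{2+\delta}$ with $\delta=\max(d/2+\varepsilon,1)<2$ for every $d\le 3$ (and small $\varepsilon$), so $2+\delta<4$ and these bounds are not available. Interpolation does not rescue them: in $H^2$ you would only get $\xi^{1/2}$. Since Step 2 feeds $\Vert\Delta(E(t_n+\xi)-E(t_n))\Vert_0\lesssim\tau$ and $\Vert F(t_n+\xi)-F(t_n)\Vert_{-1}\le\Vert F(t_n+\xi)-F(t_n)\Vert_0\lesssim\tau$ into the bilinear estimates, the argument as written does not close.

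The repair is exactly one derivative of modesty, and it is what the paper does: measure the differences in $H^1$ for $E$ and $H^{-1}$ for $F$ (and $H^0\times H^{-1}$ for $(u,u')$, as you correctly do). Then only $\Vert E(t_n)\Vert_3\le m_\delta(T)$ and $\Vert F(t_n)\Vert_1\le \Vert E(t_n)\Vert_3+c\,m_\delta(T)^2$ are needed (cf.\ \eqref{fracDF2} and \eqref{eef}), and these weaker bounds still suffice for every bilinear term: e.g.\ $\Vert\Delta(E(t_n+\xi)-E(t_n))\Vert_{-1}\le\Vert E(t_n+\xi)-E(t_n)\Vert_1\lesssim\xi$ goes into the $H^{-1}$ slot of \eqref{hm1E} against $\Vert E(t_n+\xi)+E(t_n)\Vert_{\delta}\le m_\delta(T)$, and similarly for $u(F(t_n+\xi)-F(t_n))$ and for $\mathcal{F}_{\tau,\xi,n}$ via $\Vert(-\Delta)F(t_k)\Vert_{-1}\le\Vert F(t_k)\Vert_1$. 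Two further small points: the local errors \eqref{eq:locErr} involve only the exact solution, so the a priori bound \eqref{numb} on the numerical iterates is not needed here (the relevant difference of squares is $|E(t_n+\xi)|^2-|E(t_n)|^2$, not $|E(t_n)|^2-|E^n|^2$); and your ``main obstacle'' paragraph about the linearly growing factor $\mathcal{I}_F$ is correctly resolved, matching \eqref{bbF2}.
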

\begin{proof} The strategy of proof is similar to the one of Lemma \ref{lem:locerror}. In the following fix $\varepsilon>0$ and set $\delta = \mathrm{max}(d/2+\varepsilon,1)$. Let $c$ denote a constant depending only on $d$ . The local error representation \eqref{eq:locErr} together with the bilinear estimate \eqref{hm1E} implies that
\begin{equation}\label{locErrE}
\begin{aligned}
&\Vert L_F^n\Vert_{-1}  \leq c  \int_0^\tau m_\delta(T)\Big(\Vert F(t_n+\xi)-F(t_n)\Vert_{-1} + \Vert  \mathcal{F}_{\tau,\xi,n}\Vert_{-1} \Big) \\
&\qquad + (1+m_\delta(T))^2\Big( \Vert u(t_n+\xi)-u(t_n)\Vert_{-1} \\
&\quad \qquad+ (1+t_n)\Vert u'(t_n+\xi)-u'(t_n)\Vert_{-1} \Big) \dd \xi,\\
& \Vert \nabs L_u^n\Vert_{-1}+ \Vert L_{u'}^n\Vert_{-1}\leq cm_\delta(T) \int_0^\tau \Vert E(t_n+\xi)-E(t_n)\Vert_{1} \dd \xi ,\\
& \Vert \Delta L_E^n\Vert_{-1} \leq c m_\delta(T) \Vert \int_0^{t_n+\tau} F(\lambda)\dd \lambda - \tau \sum_{k=0}^n F(t_{k+1})\Vert_{-1}.
\end{aligned}
\end{equation}
Choosing $\gamma = 1$ in \eqref{mdiff2}  we obtain with the aid of \eqref{sincosb2} (which holds for all $s\in \mathbb{R}$) and the bilinear estimate \eqref{hm1E} that
\begin{equation}
\begin{aligned}\label{euup}
&\Vert u(t_n+\xi)-u(t_n)\Vert_0 + \Vert u'(t_n+\xi)-u'(t_n)\Vert_{-1} \\
&\leq c \xi (\Vert u(t_n)\Vert_{1}+ \Vert u'(t_n)\Vert_{0})+c \xi m_\delta(T)^2\\
& \leq c \xi (1+m_\delta(T))^2.
\end{aligned}
\end{equation}
Note that
\begin{equation}\label{fracDF2}
\Vert (-\Delta) F(t)\Vert_{-1} \leq \Vert F(t)\Vert_1 = \Vert E'(t)\Vert_1 \leq \Vert E(t)\Vert_3 + c m_\delta(T)^2.
\end{equation}
Choosing $\gamma = 1$ in \eqref{mdiff} we thus obtain by \eqref{sincosb2}, \eqref{hm1E} and \eqref{fracDF2} that
\begin{equation}\label{eef}
\begin{aligned}
&\Vert E(t_n+\xi)-E(t_n)\Vert_1 + \Vert F(t_n+\xi)-F(t_n)\Vert_{-1} \\
&\leq c \xi \Vert E(t_n)\Vert_3 + c\xi(1+m_\delta(T))^3\\
& \leq c \xi (1+m_\delta(T))^3.
\end{aligned}
\end{equation}
Similarly we can show that for $0\leq \xi \leq \tau$ we have (see also \eqref{bbF})
\begin{equation}\label{bbF2}
\begin{aligned}
 \Vert \int_0^{t_n+\xi}F(\lambda)\dd\lambda  -\tau \sum_{k=0}^n F(t_k) \Vert_s &\leq c \tau  t_n (1+m_\delta(T))^3.
 \end{aligned}
\end{equation}
Plugging \eqref{euup}, \eqref{eef} and \eqref{bbF2} into \eqref{locErrE} yields the assertion.
\end{proof}
For strong solutions we obtain the following convergence result:
\begin{theorem}\label{thm:st}
Fix $1\leq d \leq 3$. For any $T\in (0,\infty)$, suppose that
\begin{equation*}
E \in \mathcal{C}([0,T];H^{4}(\mathbb{T}^d)),\quad u \in \mathcal{C}([0,T]; H^{3}(\mathbb{T}^d)) \cap \mathcal{C}^1([0,T]; H^{2}(\mathbb{T}^d))
\end{equation*}
is a mild solution of \eqref{eq:ZakO} with
 \begin{equation}\label{bms}
 m_{2}(T) := \sup_{t\in [0,T]} \|(E(t),u(t),u'(t))\|_{[2]}<\infty.
 \end{equation}
Then, there exists $\tau_0>0$ such that for all $0\leq \tau\leq \tau_0$ and $t_n = n\tau \leq T$ the scheme \eqref{eq:numZ} is first-order convergent in the sense that
$$ 
\|(E(t_n)-E^n, u(t_n)-u^n,u'(t_n)-u'^n)\|_{[0]} \leq c\tau ,
$$
where $c$ depends only on $m_2(T)$, $T$ and  $d$.
\end{theorem}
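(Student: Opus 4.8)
The plan is to re-run the proof of Theorem~\ref{thm:e} with the energy-space norms replaced by the strong-solution norms: every $H^{-1}$ is replaced by $L^2$, and the $H^1$-norm of $E$ by its $H^2$-norm, so that $\|\cdot\|_{[-1]}$ becomes $\|\cdot\|_{[0]}=\|E\|_2+\|u\|_1+\|u'\|_0$ and the auxiliary function $F=\partial_t E$ is tracked in $L^2$. Applying Theorem~\ref{thm:s} directly with $\gamma=1$ would force $E\in H^{s+4}$ for some $s>d/2$, which is strictly more than the assumed $H^4$; the role of the restriction $d\le 3$ is that the asymmetric estimates \eqref{negbiest} allow one to work with $H^4$ only. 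First I would fix $\varepsilon\in(0,\tfrac12)$, set $\delta:=\max(d/2+\varepsilon,1)$ (so $d/2<\delta<2$ when $d\le3$), and choose $\gamma_0\in(0,1]$ with $\delta+2\gamma_0\le2$. Since $\|\cdot\|_{[\delta+2\gamma_0]}\le\|\cdot\|_{[2]}$, the hypothesis \eqref{bms} yields $m_{\delta+2\gamma_0}(T)<\infty$, so Theorem~\ref{thm:s} applied with $s=\delta$, $\gamma=\gamma_0$ gives a $\tau_0>0$ such that, for all $0<\tau\le\tau_0$ and $t_n=n\tau\le T$, the numerical solution is a priori bounded,
\[
m_\delta^n:=\max_{0\le k\le n}\bigl\{\|E^k\|_{2+\delta}+\|F^k\|_\delta+\|u^k\|_{1+\delta}+\|u'^k\|_\delta\bigr\}\le 2m_\delta(T)<\infty .
\]
Below, $c$ denotes a constant depending only on $m_2(T)$, $T$ and $d$, and $\tau\le\tau_0$ throughout.

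Two ingredients make the $L^2$ analysis go through. The first is an $L^2$-version of Lemma~\ref{lem:locerrorE}: choosing $\gamma=1$ in the Taylor-type expansions \eqref{mdiff}, \eqref{mdiff2} costs exactly two extra derivatives, which is precisely the regularity available under \eqref{bms}, and the computation of Lemma~\ref{lem:locerrorE} then gives
\[
\max_{0\le k\le n}\bigl\{\|L_F^k\|_0+\|\nabs L_u^k\|_0+\|L_{u'}^k\|_0+\tau\|\Delta L_E^k\|_0\bigr\}\le c\,\tau^2 .
\]
The second is that, since $L^2$ is not an algebra, every product appearing in the error recursions \eqref{ErrorrecTaylor-f}--\eqref{ErrorrecTaylor-e} must be estimated by the asymmetric inequality $\|fg\|_0\le c\|f\|_0\|g\|_\delta$, a special case of \eqref{negbiest} with $s_1=0$ and $s_2=\delta>d/2$. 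In every such product the ``smooth'' factor is an exact or numerical solution, or $\eta(t_k)=E(0)+\tau\sum_j F^j$ with $\|\eta(t_k)\|_\delta\le m_\delta(0)+t_n m_\delta^n\le c$, so the a priori bound above is exactly what is needed.

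With these in hand, the error loop closes as in Theorem~\ref{thm:e}. From \eqref{ErrorrecTaylor-f}, the stability bound \eqref{stabF} and the bilinear estimate,
\[
\|F(t_{n+1})-F^{n+1}\|_0\le(1+c\tau)\max_{0\le k\le n}\|F(t_k)-F^k\|_0+c\tau\bigl(\|u(t_n)-u^n\|_0+\|u'(t_n)-u'^n\|_0\bigr)+c\tau^2 .
\]
From \eqref{ErrorrecTaylor-e}, using that $(-\Delta+1)^{-1}$ maps $L^2$ boundedly into $H^2$,
\[
\|E(t_{n+1})-E^{n+1}\|_2\le c\max_{0\le k\le n+1}\|F(t_k)-F^k\|_0+c\,\|u(t_{n+1})-u^{n+1}\|_0+c\tau .
\]
For $(u,u')$ I would rewrite $E(t_n)-E^n$ by means of \eqref{nabE}, moving the $\eta$-part into $P_{\tau,n}$, which turns the recursion into
\[
\begin{pmatrix}\nabs(u(t_{n+1})-u^{n+1})\\[1ex]u'(t_{n+1})-u'^{n+1}\end{pmatrix}=(O_\tau+P_{\tau,n})\begin{pmatrix}\nabs(u(t_n)-u^n)\\[1ex]u'(t_n)-u'^n\end{pmatrix}+\mathcal{R}_{\tau,n},
\]
with $O_\tau$, $P_{\tau,n}$ as in \eqref{eq:o}, \eqref{eq:p}. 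The bilinear estimate together with the smoothing of $(\nabo^2+1)^{-1}$ and $\nabs^{-1}$ gives $\sup_{0<\tau\le\tau_0}\|\tau^{-1}P_{\tau,k}\|_{\mathcal{L}((H^0(\T^d))^2)}\le c$ and $\|\mathcal{R}_{\tau,n}\|_0\le c\tau\max_{0\le k\le n}\|F(t_k)-F^k\|_0+c\tau^2$, so Lemma~\ref{lem:stab} with $s=0$ bounds $\prod_{k=k_0}^n(O_\tau+P_{\tau,k})$ in $\mathcal{L}((H^0(\T^d))^2)$ by $e^{t_n(1+c)}$. Solving the recursion (the initial $(u,u')$-error vanishes since $u^0=u_0$, $u'^0=u_1$) yields $\|u(t_{n+1})-u^{n+1}\|_1+\|u'(t_{n+1})-u'^{n+1}\|_0\le c\max_{0\le k\le n}\|F(t_k)-F^k\|_0+c\tau$. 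Inserting this into the $F$-recursion and using $F(t_0)=F^0$ from \eqref{eq:init}, a discrete Gronwall argument gives $\max_{0\le k\le n+1}\|F(t_k)-F^k\|_0\le c\tau$; substituting back into the $E$- and $(u,u')$-bounds then gives $\|(E(t_{n+1})-E^{n+1},u(t_{n+1})-u^{n+1},u'(t_{n+1})-u'^{n+1})\|_{[0]}\le c\tau$, which is the claim.

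I expect the main obstacle to lie not in the Gronwall/stability scheme --- that is structurally identical to Theorems~\ref{thm:s} and \ref{thm:e} --- but in the nonlinear bookkeeping forced by the failure of the $L^2$ algebra property. The most delicate point is the $P_{\tau,n}$-term: among the operators $p_k^j$ in \eqref{def:p} the roughest is $p_k^2$, which carries the full Laplacian on $E$; to obtain $\|p_k^2 f\|_0\le c\|f\|_0$ one must use that $\nabs^{-1}$ turns $f\in L^2$ into an $H^1$-function and $(\nabo^2+1)^{-1}$ gains two further derivatives, so that $\|fg\|_0\le c\|f\|_0\|g\|_\delta$ applies with the rough factor $\Delta E(t_k)+\Delta E^k\in L^2$. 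This estimate, together with the a priori control of the numerical solution in $H^{2+\delta}\times H^\delta\times H^{1+\delta}$ supplied by Theorem~\ref{thm:s}, is precisely where the assumption $d\le 3$ is used.
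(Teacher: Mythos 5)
Your proposal is correct and follows essentially the same route as the paper: the paper's (very brief) proof likewise obtains the a priori bound $m_\delta^n\le 2m_2(T)$ by applying Theorem \ref{thm:s} with $s=d/2+\varepsilon$ and $\gamma=1-s/2$ (so that $s+2+2\gamma=4$ matches the assumed regularity, using $d\le 3$ to keep $\gamma>0$), and then reruns the argument of Theorem \ref{thm:e} with the asymmetric estimate $\Vert fg\Vert_0\le c\Vert f\Vert_0\Vert g\Vert_\delta$ in place of the $H^{-1}$ bilinear estimate. Your write-up simply fills in the details the paper leaves implicit, and the quantitative choices ($\delta+2\gamma_0\le 2$, the $L^2$ local-error lemma with $\gamma=1$, the $\mathcal{L}((L^2)^2)$ bound on $P_{\tau,k}$) all check out.
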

\begin{proof} Fix $0 <\varepsilon \ll 1$. Note that the regularity assumptions \eqref{bms}  imply that there exists a $\tau_0>0$ such that for all $0 \leq \tau \leq \tau_0$ and $t_n = n \tau \leq T$ the numerical solutions satisfy for $ \delta = d/2+ \varepsilon$ the a priori bound
$$
m^n_\delta := \max_{0\leq k \leq n} \{ \Vert E^k\Vert_{\delta+2} + \Vert F^k\Vert_\delta + \Vert u^k\Vert_{\delta+1} + \Vert u'^k\Vert_{\delta} \} \leq 2 m_{2}(T) < \infty.
$$
This follows from choosing  $s=d/2+\varepsilon$ and $\gamma = 1-s/2 $ in Theorem \ref{thm:s}, whereupon in particular $\gamma = 1-d/4 - \varepsilon/2> 0$ as $d \leq 3$. Together with the $L^2(\mathbb{T}^d)$ estimate
\begin{equation}\label{L2est}
 \Vert f g \Vert_0 \leq c \Vert f\Vert_0 \Vert g \Vert_\delta,
\end{equation}
the proof can be completed along the lines of the proof of Theorem \ref{thm:e}.
\end{proof}

\section{Second-order Scheme}\label{sec:2app}
In this section we derive a second-order trigonometric integration scheme for the Zakharov system \eqref{eq:ZakO} based on the mild solutions \eqref{eq:solZ}. In order to achieve this we use a second-order exponential integrator in the approximation of $F$. Furthermore, we approximate the integrals in $(u,u')$ with a trapezoidal rule, i.e., we use that
\begin{equation}\label{trapez}
\int_0^\tau f(\xi) \dd \xi = \frac{\tau}{2} \left(f(\tau) +
  f(0)\right) + \mathcal{O}_s(\tau^3 \sup_{0 \leq \xi \leq \tau} \Vert f''(\xi) \Vert_s),
\end{equation}
where for notational simplicity we here use the notation
$\mathcal{O}_s(z)$ which denotes a remainder term depending on $z\geq 0$ when measured in $H^s$, i.e.,
$$
f = g + \mathcal{O}_s (z) \quad \text{ if } \quad \Vert f - g \Vert_s \leq c z,
$$
for some constant $c> 0$.

Using the second-order Taylor series expansion
\begin{equation*}\label{T2}
\begin{aligned}
(uF+u' \mathcal{I}_F)(t_n+\xi) &= (uF+u' \mathcal{I}_F)(t_n) + \xi (uF+u' \mathcal{I}_F)'(t_n)\\&+ \mathcal{O}_s\Big( \tau^2 \sup_{0 \leq \xi \leq \tau} \Vert (uF+u' \mathcal{I}_F)''(t_n+\xi)\Vert_s\Big)
\end{aligned}
\end{equation*}
in the integral in $F$ as well as the trapezoidal rule \eqref{trapez} for the approximation of the integrals in $u$ and $u'$ yields the following approximation to the  solutions \eqref{eq:solZ}  of the Zakharov system: For sufficiently smooth solutions we have
\begin{equation}\label{eq:solZT2}
\begin{split}
F(t_n+\tau)= &\mathrm{e}^{i \tau \Delta} F(t_n) -i\int_0^\tau \mathrm{e}^{i (\tau-\xi)\Delta}\dd \xi 
(uF+u' \mathcal{I}_F)(t_n) \\
& -i\int_0^\tau \mathrm{e}^{i (\tau-\xi)\Delta} \xi \dd \xi \big( u' F + uF' +u'' \mathcal{I}_F + u' \mathcal{I}_F'\big)(t_n) \\
& + \mathcal{O}_s\Big(\tau^3 \sup_{0 \leq \xi \leq \tau} \Vert (uF+u'\mathcal{I}_F)''(t_n+ \xi)\Vert_s\Big) \\
 u(t_n+\tau) =& \cos (\tau \nabo)u(t_n) + \nabo^{-1} \sin (\tau \nabo) u'(t_n)+ \frac{\tau}{2} \frac{\sin(\tau\nabo)}{\nabo} \Delta \vert E(t_n)\vert^2 \\&
 + \mathcal{O}_s\Big(\tau^3 \sup_{0 \leq \xi \leq \tau} \Vert \vert E \vert '' (t_n+ \xi)\Vert_{s+1}\Big)\\
  u'(t_n+\tau) =& - \nabo\sin (\tau \nabo)u(t_n) +\cos (\tau \nabo) u'(t_n)\\
&+ \frac{\tau}{2} \left( \Delta \vert E(t_n+\tau)\vert^2 + \mathrm{cos}(\tau \nabo) \Delta \vert E(t_n)\vert^2 \right)\\
& + \mathcal{O}_s\Big(\tau^3 \sup_{0 \leq \xi \leq \tau} \Vert \vert E \vert '' (t_n+ \xi)\Vert_{s+2}\Big)\\
 E(t_n+\tau) =& (1 -\Delta)^{-1}\Big(i F(t_n+\tau)-(u(t_n+\tau)-1)\mathcal{I}_F(t_n+\tau)\Big).
\end{split}
\end{equation}
In order to derive a robust scheme we  integrate the terms involving $\mathrm{e}^{i \xi \Delta}\xi^\delta$ with $\delta = 0,1$ exactly, i.e., we will use that
\begin{equation}\label{intsT2}
\begin{split}
&   \int_0^\tau \mathrm{e}^{i (\tau-\xi)\Delta} \dd \xi  f = -\frac{1}{i \Delta} \left(1 - \mathrm{e}^{i \tau \Delta}\right) f,\\
& \int_0^\tau \mathrm{e}^{i (\tau-\xi)\Delta} \xi \dd \xi  f =-\frac{1}{i \Delta} \big(\tau + \frac{1}{i \Delta}(1-\mathrm{e}^{i \tau \Delta} )\big) f.
\end{split}
\end{equation}
Next we need to derive a suitable approximation to $\mathcal{I}_F$ defined in \eqref{IF}. We have
\begin{equation}\label{IF2a}
\mathcal{I}_F(t_n) = E_0 + \int_0^{t_n} F(\lambda) \mathrm{d}\lambda = E_0 + \sum_{k=0}^{n-1} \int_0^\tau F(t_k+\lambda) \dd \lambda.
\end{equation}
Note that by \eqref{eq:solZ} and \eqref{intsT2} we have
\begin{equation}
\begin{split}
 \int_0^\tau   F(t_k+\lambda)  \dd \lambda  &= \int_0^\tau \mathrm{e}^{i \lambda \Delta} F(t_k)\dd \lambda - i \int_0^\tau \int_0^\lambda \mathrm{e}^{i (\lambda - \xi)\Delta} (uF+u'\mathcal{I}_F)(t_k+\xi)\dd\xi \dd \lambda\\
 &= \int_0^\tau \mathrm{e}^{i \lambda \Delta} F(t_k)\dd \lambda - i \int_0^\tau \int_0^\lambda \mathrm{e}^{i (\lambda - \xi)\Delta} (uF+u'\mathcal{I}_F)(t_k)\dd\xi \dd \lambda\\
 & + \mathcal{O}_s \big( \tau^3 \sup_{0 \leq \xi \leq \tau} \Vert (uF+u'\mathcal{I}_F)'(t_k+\xi)\Vert_s\big)\\
& =  \frac{1}{i \Delta} (\mathrm{e}^{i \tau \Delta}-1) F(t_k) \\
&+ \frac{1}{\Delta} \Big( \tau - \frac{1}{i \Delta} (\mathrm{e}^{i \tau \Delta} -1)\Big) \Big( u(t_k)F(t_k) + u'(t_k) (E_0 + \tau \sum_{j= 0}^k F(t_j)) \Big)\\
& + \mathcal{O}_s \big( \tau^3 \sup_{0 \leq \xi \leq \tau} \Vert (uF+u'\mathcal{I}_F)'(t_k+\xi)\Vert_s \big)
\big).\\
\end{split}
\end{equation}
Plugging the above expansion into \eqref{IF2a}  yields that
\begin{equation}\label{IFT2}
\begin{split}
\mathcal{I}_F(t_n) 
& = E_0 - \tau \mathcal{D}_1(\tau \Delta) \sum_{k= 0}^{n-1} F(t_k) \\
& + \tau \mathcal{D}_2(\tau \Delta)  \sum_{k=0}^{n-1}  \Big( u(t_k)F(t_k) + u'(t_k) (E_0 + \tau \sum_{j= 0}^k F(t_j)) \Big)\\
& +  \mathcal{O}_s \big( \tau^2 t_n \sup_{0 \leq \xi \leq t_n} \Vert (uF+u'\mathcal{I}_F)'(\xi)\Vert_s
\big),\\
\end{split}
\end{equation}
where
\begin{equation}\label{Dop}
\mathcal{D}_1(\tau \Delta) := \frac{1-\mathrm{e}^{i\tau\Delta}}{i\tau \Delta}, \quad \mathcal{D}_2(\tau \Delta) :=\Delta^{-1} \big(1+ \mathcal{D}_1(\tau\Delta)\big).
\end{equation}
Using the differential equations \eqref{eq:Zak} as well as the definition of $\mathcal{I}_F$ in \eqref{IF} we furthermore obtain that
\begin{equation}\label{eqit}
\begin{aligned}
& \big( u' F + uF' +u'' \mathcal{I}_F + u' \mathcal{I}_F'\big)(t_n)  \\
& =\big( u' F + u (i \Delta F - i u F - i u' \mathcal{I}_F)  + \mathcal{I}_F \Delta (u + \vert E\vert^2) + u' F\big)(t_n).
\end{aligned}
\end{equation}
Plugging the relations \eqref{intsT2}, \eqref{IFT2} and \eqref{eqit} into \eqref{eq:solZT2} yields a  second-order trigonometric time-integration scheme by setting
\begin{equation}\label{inT2}
\begin{aligned}
&E^0 = E_0,\quad u^0=u_0,\quad u'^0 = u_1, \\
&F^0 = i (\Delta E^0-u^0E^0),\quad S_F^0 =\tau F^0, \quad \mathcal{I}_F^0 := E_0
\end{aligned}
\end{equation}
and for $n \geq 0$
\begin{equation}\label{eq:numZ2}
\begin{aligned}
 F^{n+1}  &= \mathrm{e}^{i \tau \Delta} F^n +i \tau \mathcal{D}_1(\tau\Delta)
\big( u^n F^n + u'^n \mathcal{I}_F^n)\\
&+\tau  \mathcal{D}_2(\tau \Delta )\Big( 2u'^n F^n +i u^n ( \Delta F^n -  u^n F^n -  u'^n \mathcal{I}_F^n )  + \mathcal{I}_F^n \Delta (u^n + \vert E^n\vert^2) \Big),\\
  u^{n+1}& = \cos (\tau \nabo)u^n + \nabo^{-1} \sin (\tau \nabo) u'^n +\frac{\tau}{2} \nabo^{-1}\sin(\tau\nabo) \Delta \vert E^n\vert^2,\\
    \mathcal{I}_F^{n+1} &= E_0 - \mathcal{D}_1(\tau\Delta)  S_F^n
 + \tau \mathcal{D}_2(\tau\Delta) \sum_{k=0}^{n}  \Big( u^kF^k+ u'^k (E_0 + S_F^k \big)\Big),\\
   E^{n+1}  &=  (-\Delta+1)^{-1}\left(  i F^{n+1} -(u^{n+1}-1)  \mathcal{I}_F^{n+1}\right),\\
    u'^{n+1}  &= - \nabo\sin (\tau \nabo)u^n +\cos (\tau \nabo) u'^n+ \frac{\tau}{2} \left( \Delta \vert E^{n+1}\vert^2 + \mathrm{cos}(\tau \nabo) \Delta \vert E^n\vert^2 \right),\\
  S_F^{n+1}  &= S_F^n + \tau F^{n+1}.
\end{aligned}
\end{equation}

\begin{remark}[Second-order convergence]
For sufficiently smooth solutions the trigonometric integration scheme \eqref{eq:numZ2} is second-order convergent without imposing any spatial-dependent time-step restriction, i.e.,  also in the limit $\Delta x \to 0$. More precisely, Theorem \ref{thm:s} holds for \eqref{eq:numZ2}  with $\gamma = 2$. The ideas in the error analysis are thereby similar to the ones used in Section~\ref{sect:err}. The only additional important estimate is that
\begin{equation*}
\Vert \mathcal{D}_2(\tau \Delta) \big( (\Delta f) g\big)\Vert_s \leq c \Vert f \Vert_s \Vert g \Vert_{s+2}
\end{equation*}
for some constant $c>0$.  We omit the details of the proof and refer to  \cite{Gau15}  for the analysis of second-order trigonometric integrators for semilinear wave equations and to \cite{HochOst10} for the analysis of higher-order exponential integrators.
\end{remark}

\begin{remark}
Note that for given $(E^n,F^n,u^n,u'^n, S_F^n, \mathcal{I}_F^n)$ we can compute the next iteration in \eqref{eq:numZ2} without saving $(E^k,F^k,u^k,u'^k, S_F^k, \mathcal{I}_F^k)$ for any $k < n$ by setting
\begin{equation}\label{eq:numZ2one}
\begin{aligned}
 \mathcal{I}_F^{n+1} & :=  \mathcal{I}_F^n - \tau \mathcal{D}_1(\tau \Delta)F^{n}+ \tau \mathcal{D}_2(\tau \Delta)  \Big( u^{n}F^{n} + u'^{n} (E_0+S_F^{n}) \Big), \quad \mathcal{I}_F^0 = E_0.
\end{aligned}
\end{equation}
\end{remark}

\section{Numerical experiments}
In this section we numerically confirm the first-, respectively, second-order convergence rate of the trigonometric time-integration schemes \eqref{eq:numZ} and \eqref{eq:numZ2} towards the exact solutions of the Zakharov system \eqref{eq:ZakO}. Furthermore, we \emph{numerically test the geometric properties} of the trigonometric integration schemes, i.e., the conservation of the $L^2$ norm of $E(t)$ (see \eqref{eq:l2con}), the conservation of the energy (see \eqref{eq:ham}) as well as the shape preservation of solitary waves over ``long times''.

\begin{remark}
Note that in the derived convergence bounds, the
  error constants depend on $T$, which is natural in subcritical regimes. Nevertheless, the numerical findings suggest that for a sufficiently small CFL number the  geometric quantities are preserved on ``long'' time intervals. Thereby, we define the value CFL~$ : = \tau (\Delta x)^{-2}$ with $\tau$ and $\Delta x$ denoting the time- and spatial-step size, respectively.
\end{remark}

\begin{remark}
In the numerical experiments we use a standard Fourier pseudo-spectral
method for the space discretization. For sufficiently smooth solutions
the fully discrete error then behaves like $\tau + K^{-r}$ for the first-order scheme and like $\tau^2 + K^{-r}$ for the second-order scheme for some $r>0$ depending on the smoothness of the solutions. For a fully discrete analysis of exponential-type time integrators coupled to a spectral approximation in space for Schr\"odinger, respectively, semilinear wave equations we refer to \cite{Faou12}  and \cite{Gau15}, respectively.
\end{remark}

\begin{example}\label{ex:1}
We consider the Zakharov system \eqref{eq:ZakO} set on the one-dimensional torus $\mathbb{T}$ with initial values
\begin{equation}\label{ex1:ini}
\begin{aligned}
& E(0,x) = (2-\cos(x)\sin(2x))^{-1}\sin(2x) \cos(4x)+i \sin(2x) \cos(x),\\
& u(0,x) =  (2-\sin(2x)^2)^{-1}\sin(x) \cos(2x), \quad \partial_t u(0,x) = (2-\cos(2x)^2)^{-1}\sin(x)
\end{aligned}
\end{equation}
normalized in $H^2, H^1$ and $L^2$, respectively. In order to test the convergence rate of the first-, respectively, second-order trigonometric time-integration scheme \eqref{eq:numZ}  and \eqref{eq:numZ2} we take the numerical method presented in \cite{Bao05} as a reference solution. For the latter we choose a very small time-step size to ensure to be sufficiently close to the exact solutions. For the space discretization we choose the largest Fourier mode $K = 2^{10}$ (i.e., the spatial mesh size $\Delta x = 0.0061$) and integrate up to $T = 1$. The error of $(E,u)$ measured in the corresponding discrete $H^2\times H^1$ norm is illustrated in Figure \ref{fig:Example1}.

\begin{figure}[h!]
\centering
\includegraphics[width=0.4\linewidth]{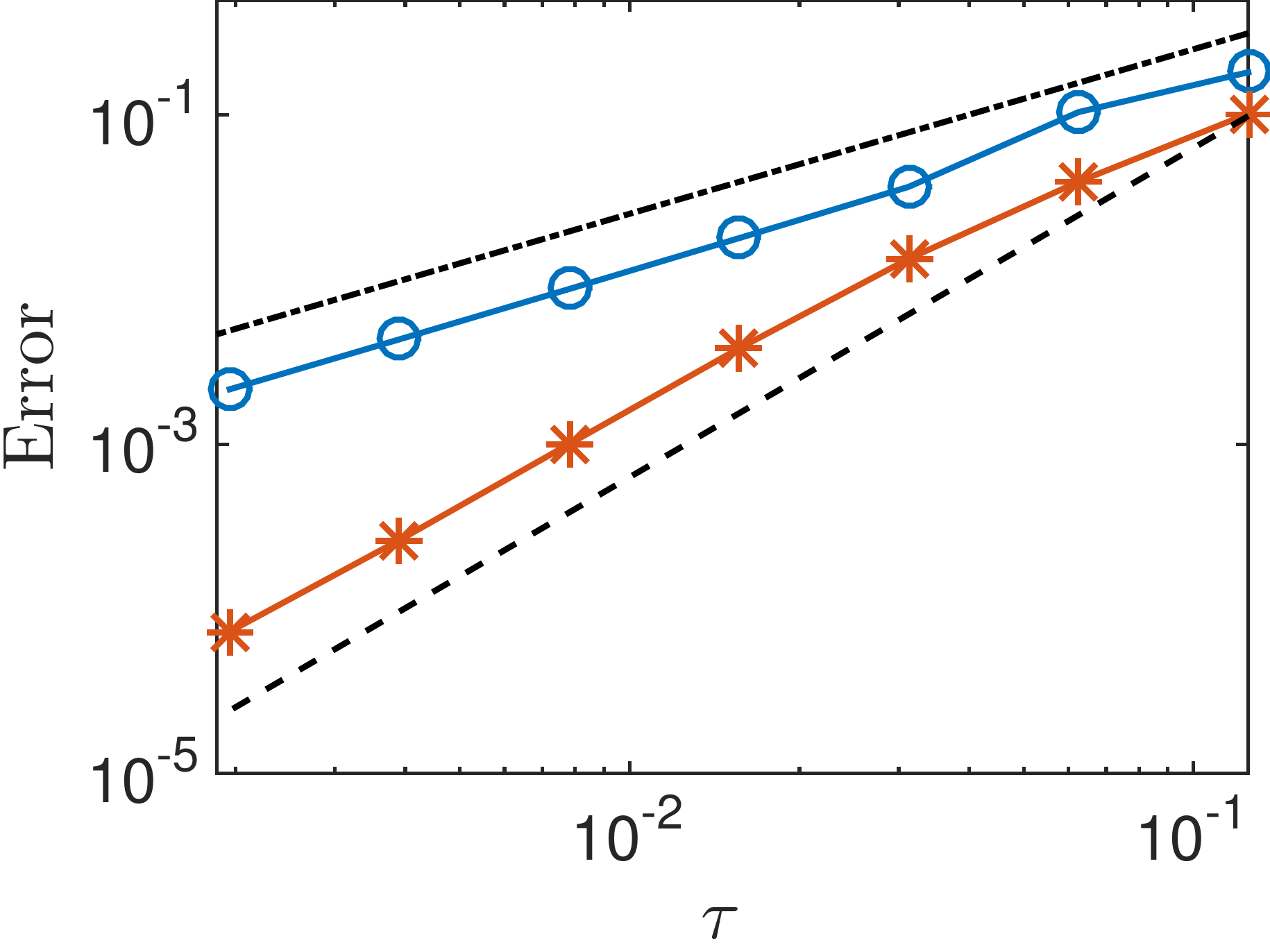}
\hfill
\includegraphics[width=0.4\linewidth]{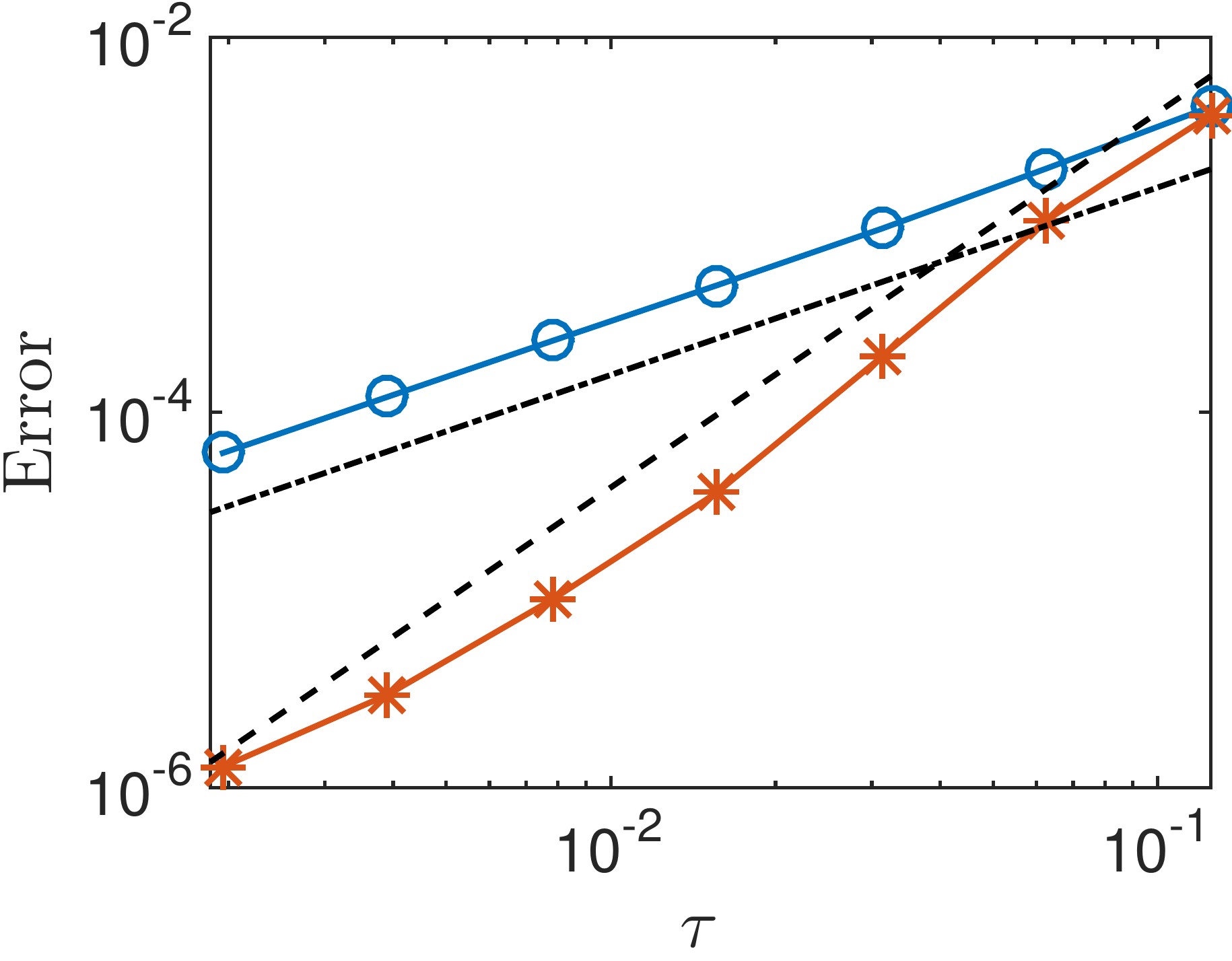}
\caption{Orderplot (double logarithmic). Convergence rates of the first-order scheme \eqref{eq:numZ}  (blue, circle) and the second-order scheme \eqref{eq:numZ2} (red, star). Left picture: Error in $E$ measured in $H^2$. Right picture: Error in $u$ measured in $H^1$. The slope of the dashed-dotted and dashed line is one and two, respectively.}\label{fig:Example1}
\end{figure}

\end{example}

\begin{example}[Solitary waves]\label{exSW}
Exact solutions of the Zakharov system \eqref{eq:ZakO}  are explicitly given by so-called solitary wave solutions, which for the Zakharov system set on $\mathbb{R}$ are described by
\begin{equation}
\begin{aligned}
& E(t,x) = \sqrt{2 B^2(1-C^2)} \mathrm{sech}(B(x-Ct)) \exp\big(i \left(C/2x - \left( C^2/4-B^2\right) t)\right)\big),\\
& u(t,x) = - 2 B^2 \mathrm{sech}^2\left(B(x-Ct)\right),\\&
 \partial_t u(t,x) = - 4B^3 C\mathrm{sinh}\left(B(x-Ct)\right)\mathrm{cosh}^{-3}\left(B(x-Ct)\right)
\end{aligned}\label{solsol}
\end{equation}
with $B,C \in \mathbb{R}$. For the numerical simulations we choose ``a large torus'' (more precisely $x \in [-10 \pi,10\pi]$). In Figure~\ref{fig:exSWS} we simulate the soliton solution \eqref{solsol} with the trigonometric integration schemes~\eqref{eq:numZ} and \eqref{eq:numZ2} up to $T= 100$. We carry out the simulations for two different CFL numbers. Furthermore, we set $B = 0.5$ and $C = 0.15$.

\begin{figure}[h!]
\centering
\includegraphics[width=0.47\linewidth]{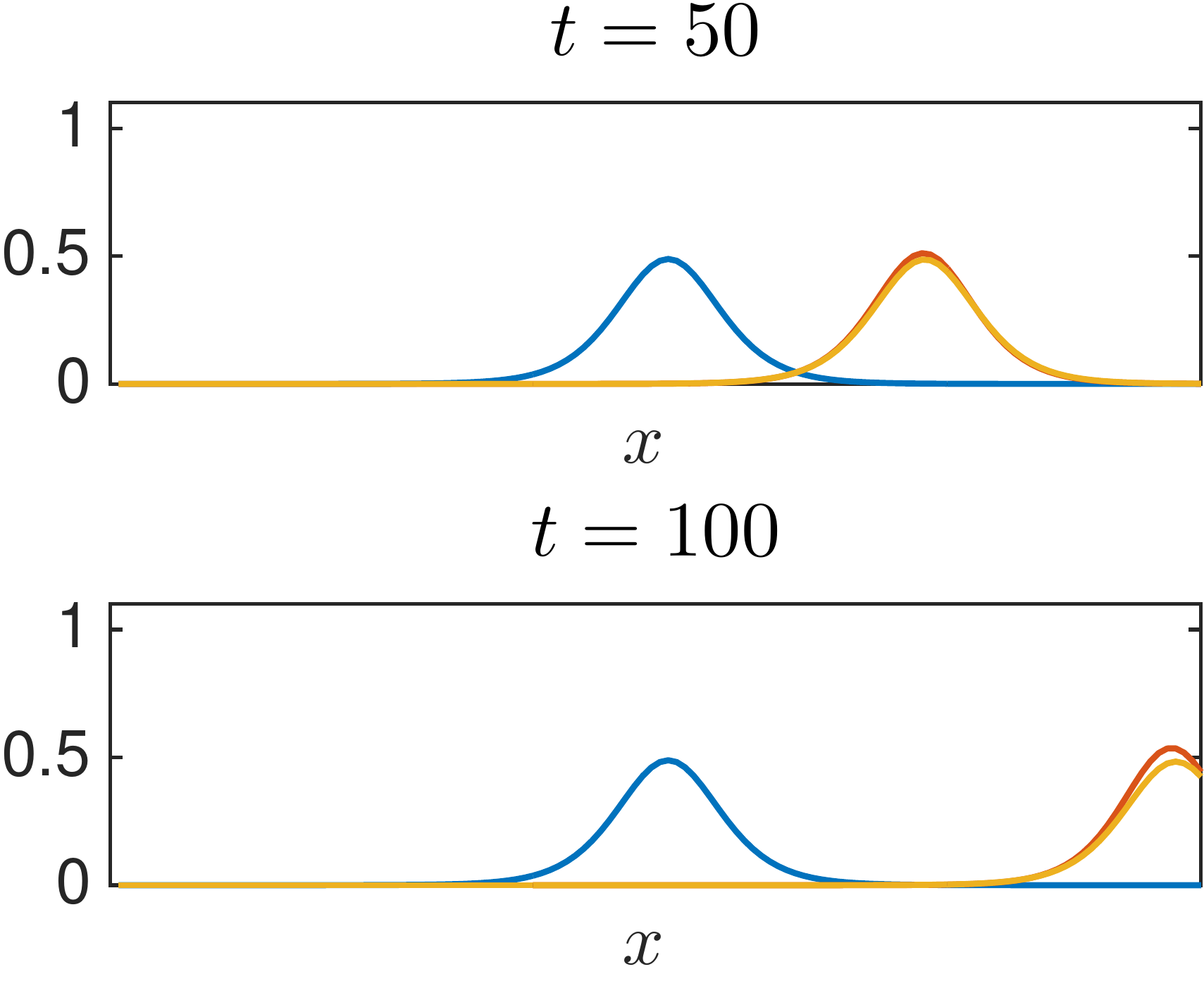}
\hfill
\includegraphics[width=0.47\linewidth]{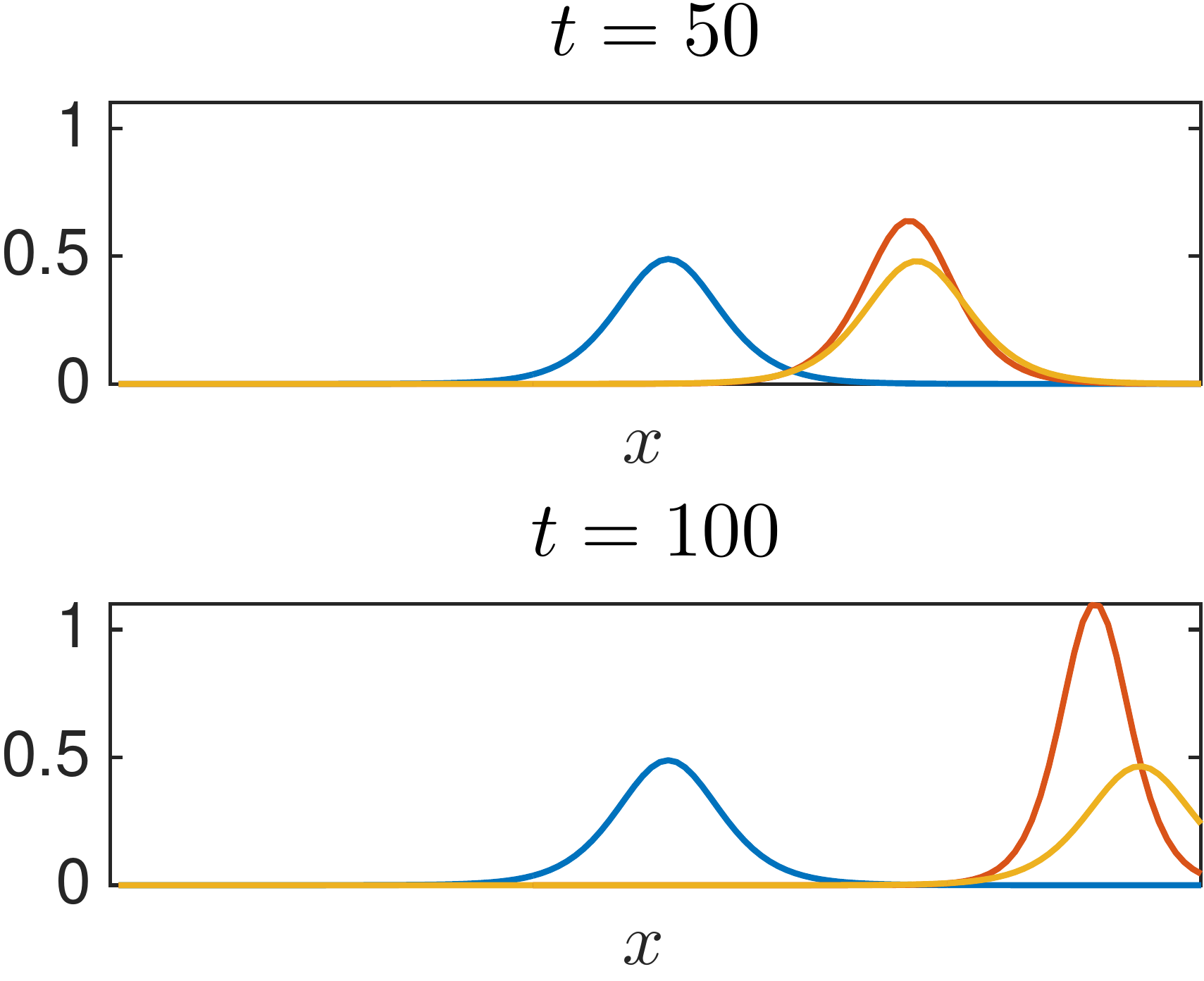}
\caption{Simulation of solitary wave $\vert E^n\vert$ at different times $t$ with first-order scheme \eqref{eq:numZ} (red)  and second-order scheme \eqref{eq:numZ2} (yellow). Initial profile: blue. Left picture: CFL$=3.2$. Right picture: CFL$=32$.}\label{fig:exSWS}
\end{figure}

\end{example}

\begin{example}[Energy conservation]
In this example we numerically test the $L^2$ conservation \eqref{eq:l2con} and the energy conservation \eqref{eq:ham} of the first-and second-order trigonometric time-integration scheme \eqref{eq:numZ} and \eqref{eq:numZ2}, respectively. The numerical findings are illustrated in Figure \ref{fig:Energy} (left picture: first-order scheme, right picture: second-order scheme). In both simulations we choose CFL$ \approx 5$. For a too large CFL number additional numerical findings suggest that the energy is no longer conserved.

\begin{figure}[h!]
\centering
\includegraphics[width=0.43\linewidth]{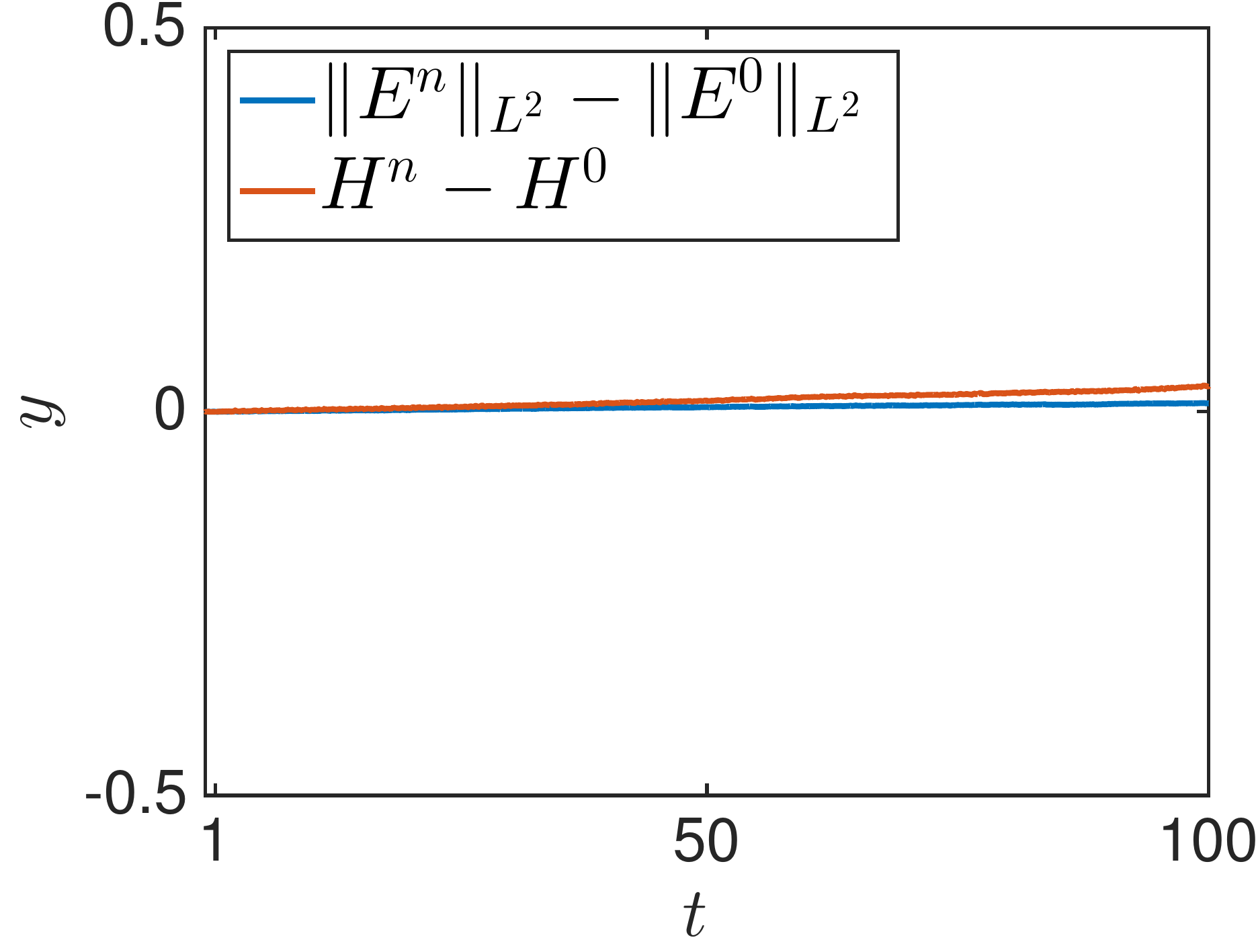}
\hfill
\includegraphics[width=0.43\linewidth]{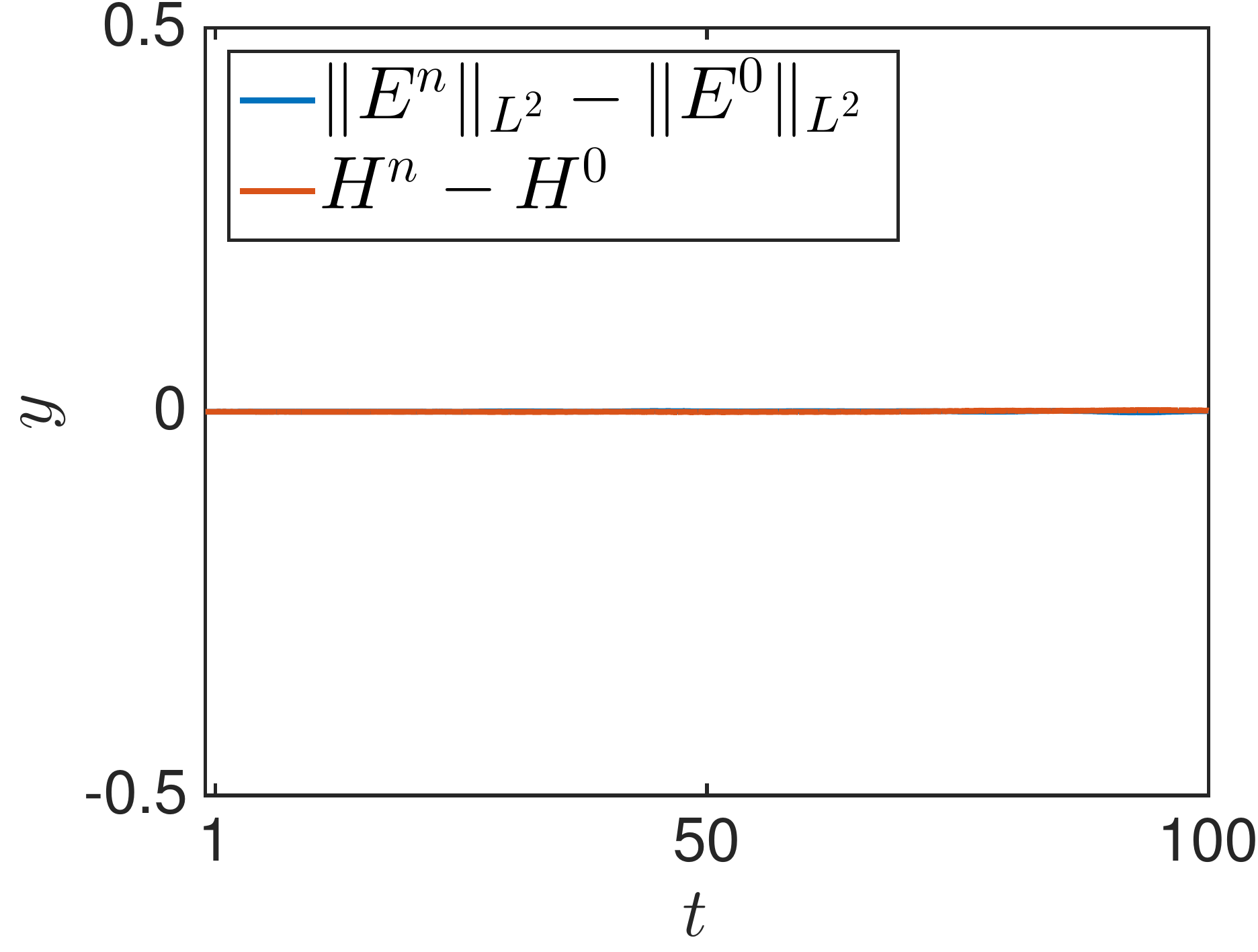}
\caption{Simulation of the deviation of the numerical energy $ H(E^n,u^n,{u^\prime}^n)-H(E^0,u^0,{u^\prime}^0) $ and the $L^2$ norm $ \Vert E^n\Vert_{L^2}-~\Vert E^0\Vert_{L^2}$. Left picture: First-order scheme \eqref{eq:numZ}. Right picture: Second-order scheme \eqref{eq:numZ2}.}\label{fig:Energy}
\end{figure}
\end{example}

\section*{Acknowledgement}
The second author gratefully acknowledges financial support by the Deutsche Forschungsgemeinschaft (DFG) through CRC 1173.

\bibliographystyle{plain}
\bibliography{time-int-zak}

\end{document}